\theoremstyle{plain}
\newtheorem{theorem}{Theorem}[section]
\newtheorem{lemma}[theorem]{Lemma}
\newtheorem{corollary}[theorem]{Corollary}
\newtheorem{proposition}[theorem]{Proposition}
\newtheorem{remark}{Remark}
\newcommand{\R}{\ensuremath{\mathbb R}}
\newcommand{\A}{\ensuremath{\mathcal A}}
\newcommand{\N}{\ensuremath{\mathcal N}}
\newcommand{\ASZSL}{\texttt{AS-ZSL}}
\newcommand{\MVP}{Strategy MVP}
\newcommand{\AC}{Strategy AC2CD}
\newcommand{\compCL}{\texttt{compCL}}
\newcommand{\QP}{\texttt{QP}}
\newcommand{\zeroSum}{\texttt{zeroSum}}
\newcommand{\fijx}[3]{\ensuremath{f^{#1,#2}_{#3}}}
\newcommand{\eps}{\varepsilon}
\DeclarePairedDelimiter{\norm}{\lVert}{\rVert} 
\DeclarePairedDelimiter{\abs}{\lvert}{\rvert} 
\DeclareMathOperator*{\argmin}{Argmin} 
\DeclareMathOperator*{\argmax}{Argmax} 
\DeclareMathOperator*{\sgn}{sign}
\def\f0{\varphi}
\def\sigmau{\sigma_+}
\def\sigmav{\sigma_-}
\def\eps{\varepsilon}
\def\ihat{\hat \imath}
\def\jhat{\hat \jmath}
\def\jbar{\bar \jmath}
\newcommand{\email}[1]{E-mail: \href{mailto:#1}{\texttt{#1}}}
\begin{document}
\thispagestyle{plain}

\setcounter{page}{1}

{\centering
{\LARGE \bfseries A decomposition method for lasso problems with zero-sum constraint}

\bigskip\bigskip
Andrea Cristofari$^*$
\bigskip

}

\begin{center}
\small{\noindent$^*$Department of Civil Engineering and Computer Science Engineering, \\ University of Rome ``Tor Vergata'', Via del Politecnico, 1, 00133 Rome, Italy \\
\email{andrea.cristofari@uniroma2.it}\\
}
\end{center}

\bigskip\par\bigskip\par
\noindent \textbf{Abstract.}
In this paper, we consider lasso problems with zero-sum constraint,
commonly required for the analysis of compositional data in high-dimensional spaces.
A novel algorithm is proposed to solve these problems, combining a tailored active-set technique, to identify the zero variables in the optimal solution,
with a 2-coordinate descent scheme.
At every iteration, the algorithm chooses between two different strategies:
the first one requires to compute the whole gradient of the smooth term of the objective function and is more accurate in the active-set estimate,
while the second one only uses partial derivatives and is computationally more efficient.
Global convergence to optimal solutions is proved
and numerical results are provided on synthetic and real datasets, showing the effectiveness of the proposed method.
The software is publicly available.

\bigskip\par
\noindent \textbf{Keywords.} Nonlinear programming. Convex programming. Constrained lasso. Decomposition methods. Active-set methods.


\section{Introduction}
In this paper, we address the following optimization problem:
\begin{equation}\label{prob}
\begin{split}
& \min \frac 12 \norm{Ax-y}^2 + \lambda \norm x_1 \\
& \sum_{i=1}^n x_i = 0,
\end{split}
\end{equation}
where $\norm{\cdot}$ denotes the Euclidean norm, $\norm{\cdot}_1$ denotes the $\ell_1$ norm,
$x \in \R^n$ is the variable vector 
and $A \in \R^{m \times n}$, $y \in \R^m$, $\lambda \in \R$ are given, with $\lambda \ge 0$.

Problem~\eqref{prob} is an extension of the well known lasso problem~\cite{tibshirani:1996}, imposing the \textit{zero-sum constraint} $\sum_{i=1}^n x_i = 0$.
This constraint  is required in some regression models for \textit{compositional data}, i.e., for data representing percentages, or proportions, of a whole.
Applications with data of this type frequently arise in many different fields, such as geology, biology, ecology and economics.
For example, in microbiome analysis, datasets are usually normalized and result in compositional data~\cite{gloor:2017,shi:2016}.

Let us briefly review the role of the zero-sum constraint in regression analysis for compositional data.
Assume we are given a response vector $y \in \R^m$ and an $m \times n$ matrix $Z$ of covariates,
where every row of $Z$ is a sample.
By definition, compositional data are vectors whose components are non-negative and sum to~$1$,
so we can assume each row of $Z$ belong to the positive simplex
$\Delta^+_{n-1} = \{z \in \R^n \colon \sum_{i=1}^n z_i = 1, \, z > 0\}$.
Due to the constrained form of the sample space, standard statistical tools designed for unconstrained data cannot be applied to
build a regression model.
To overcome this issue, \textit{log-contrast models} were proposed in~\cite{aitchison:1982,aitchison:1984},
using the following log-ratio transformation of data:
\[
W_{ij} = \log\Bigl(\frac{Z_{ij}}{Z_{ir}}\Bigr), \quad i \in \{1,\ldots,m\}, \quad j \in \{1,\ldots,n\} \setminus \{r\},
\]
where the $r$th component is referred to as \textit{reference component}.
Denoting by $W_{\setminus r} \in \R^{m \times (n-1)}$ the matrix with entries $W_{ij}$,
a linear log-contrast model is then obtained as follows:
\[
y = W_{\setminus r} \, x_{\setminus r} + \eps,
\]
where $x_{\setminus r} = \begin{bmatrix} x_1 & \ldots & x_{r-1} & x_{r+1} & \ldots & x_n \end{bmatrix}^T \in \R^{n-1}$
is the vector of regression coefficients and $\eps \in \R^m$ is a vector of independent noise with mean zero and finite variances.
Using the definition of $W_{\setminus r}$, we can write
\[
y_i = \sum_{j \ne r} x_j \log{(Z_{ij})}  - \sum_{j \ne r} x_j \log(Z_{ir}) + \eps_i, \quad i = 1,\ldots,m.
\]
Hence, introducing $x_r = -\sum_{j \ne r} x_j$, we can express the above model in the following symmetric form:
\begin{equation}\label{log_contrast_model}
y = A x + \eps, \quad \text{subject to } \sum_{i=1}^n x_i = 0,
\end{equation}
where $A \in \R^{m \times n}$ is the matrix with $A_{ij} = \log(Z_{ij})$ in position $(i,j)$.
Note that the zero-sum constraint has a central role, since it allows the response $y$ to be expressed as a linear combination of log-ratios.

Starting from~\eqref{log_contrast_model}, an $\ell_1$-regularized least-square formulation with zero-sum constraint was first considered in~\cite{lin:2014}
for variable selection in high-dimensional spaces, leading to the optimization problem~\eqref{prob}.
The resulting estimator was shown in~\cite{lin:2014} to be \textit{model selection consistent}
as well as to ensure \textit{scale invariance}, \textit{permutation invariance} and \textit{selection invariance}.
Moreover, in~\cite{altenbuchinger:2017} it was shown that model~\eqref{prob} is \textit{proportional reference point insensitive},
allowing to overcome some issues related to the choice of a reference point in molecular measurements.

Well established algorithms can be used to solve~\eqref{prob},
exploiting the peculiar structure of the problem.
In this fashion, an augmented Lagrangian scheme with the subproblem solved by cyclic coordinate descent was proposed in~\cite{lin:2014},
while a coordinate descent strategy based on random selection of variables was proposed in~\cite{altenbuchinger:2017}.
Moreover, considering more general forms of constrained lasso, an approach based on quadratic programming and an ADMM method were analyzed in~\cite{gaines:2018},
a semismooth Newton augmented Lagrangian method was proposed in~\cite{deng:2020} and path algorithms were designed in~\cite{gaines:2018,jeon:2020,tibshirani:2011}.

In this paper we propose a decomposition algorithm, named Active-Set Zero-Sum-Lasso (\ASZSL), to efficiently solve problem~\eqref{prob} in a large-scale setting.
The first ingredient of our approach is an \textit{active-set technique} to identify the zero variables in the optimal solution,
which is expected to be sparse due to the $\ell_1$ regularization.
The second ingredient is a \textit{2-coordinate descent scheme} to update the variables estimated to be non-zero in the final solution.
In more detail, we define two different strategies:
the first one uses the whole gradient of the smooth term of the objective function and is more accurate in the identification of the zero variables,
while the second one only needs partial derivatives and is computationally more efficient.
To balance computational efficiency and accuracy, in the algorithm we use a simple rule to choose between the two strategies,
based on the progress in the objective function.
The proposed method is proved to converge to optimal solutions and is shown to perform well in the numerical experiments,
compared to other approaches from the literature.
The~\ASZSL\ software is available at~\url{https://github.com/acristofari/as-zsl}.

The rest of the paper is organized as follows.
In Section~\ref{sec:opt} we introduce the notation and give some optimality results for problem~\eqref{prob}, in Section~\ref{sec:alg} we present the~\ASZSL\ algorithm,
in Section~\ref{sec:conv} we establish global convergence of~\ASZSL\ to optimal solutions, in Section~\ref{sec:res} we show numerical results on synthetic and real datasets,
in Section~\ref{sec:concl} we finally draw some conclusions.
Moreover, in Appendix~\ref{app:subproblem} we report some details on the subproblem we need to solve in the algorithm.

\section{Preliminaries and optimality results}\label{sec:opt}
In this section, we introduce the notation and give some optimality results for problem~\eqref{prob}.
First note that we did not include an intercept term in model~\eqref{log_contrast_model},
since it can be omitted if the vector $y$ and the columns of $A$ have mean zero.


\subsection{Notation}
We indicate by $e$ the vector made of all ones (so that the zero-sum constraint $\sum_{i=1}^n x_i = 0$
can be rewritten as $e^T x = 0$).
We denote the Euclidean norm, the $\ell_1$ norm and the sup norm of a vector $v$ by $\norm v$, $\norm v_1$ and $\norm v_{\infty}$, respectively.
The maximum between a vector $v$ and $0$, denoted by $\max\{v,0\}$, is understood as a vector whose $i$th entry is $\max\{v_i,0\}$.
Given a matrix $M$, the element in position $(i,j)$ is indicated with $M_{ij}$, while $M^i$ is the $i$th column of $M$.

Moreover, we define
\[
\f0(x) = \dfrac 12 \norm{Ax-y}^2,
\]
so that the objective function of problem~\eqref{prob} can be expressed as
\[
f(x) = \f0(x) + \lambda \norm x_1.
\]
The gradient of $\f0$ is denoted by $\nabla \f0$.

\subsection{Optimality conditions}
Let us first rewrite~\eqref{prob} as a smooth problem with non-negativity constraints,
using a well known variable transformation~\cite{gaines:2018,schmidt:2007,tibshirani:1996}.
More specifically, we can introduce the variables $x_+,x_- \in \R^n$ in order to split $x$ into positive and negative components, i.e.,
$x = x_+ - x_-$, with $x_+ = \max\{x,0\}$ and $x_- = \max\{-x,0\}$.
We obtain the following reformulation:
\begin{equation}\label{prob_eq}
\begin{split}
& \min \f0(x_+-x_-) + \lambda e^T(x_++x_-) \\
& e^T(x_+-x_-) = 0 \\
& x_+ \ge 0 \\
& x_- \ge 0.
\end{split}
\end{equation}
Note that the number of variables has doubled, i.e., problem~\eqref{prob_eq} has $2n$ variables.
It is easy to obtain the following equivalence between problems~\eqref{prob} and~\eqref{prob_eq}.
\begin{lemma}\label{lemma:opt_preq}
Let $x^*$ be an optimal solution of problem~\eqref{prob}. Then, $(x_+^*,x_-^*)$ is an optimal solution of problem~\eqref{prob_eq}, where $x_+^* = \max\{x^*,0\}$ and $x_-^* = \max\{-x^*,0\}$.

Vice versa, if $(x_+^*,x_-^*)$ is an optimal solution of problem~\eqref{prob_eq}, then $x^* = x_+^* - x_-^*$ is an optimal solution of problem~\eqref{prob}.
\end{lemma}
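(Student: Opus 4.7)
The proof is essentially bookkeeping between the two formulations, so the plan is to exhibit the right bijection between feasible points and verify objective values line up, with one key inequality doing the heavy lifting.

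The central observation I would isolate first is that for any pair $(u,v) \in \R^n \times \R^n$ with $u,v \ge 0$, the vector $z = u-v$ satisfies $\abs{z_i} \le u_i + v_i$ componentwise, hence $\norm{z}_1 \le e^T(u+v)$, with equality when $u_i v_i = 0$ for all $i$ (i.e., when $u = \max\{z,0\}$ and $v = \max\{-z,0\}$). This is the one-line fact that drives everything else.

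For the forward direction, I would set $x_+^* = \max\{x^*,0\}$, $x_-^* = \max\{-x^*,0\}$ and verify directly that $(x_+^*,x_-^*)$ is feasible for \eqref{prob_eq} (non-negativity is immediate, and $e^T(x_+^* - x_-^*) = e^T x^* = 0$). By the equality case of the observation above, the objective value of \eqref{prob_eq} at $(x_+^*,x_-^*)$ equals $f(x^*)$. To rule out a strictly better feasible point $(\tilde x_+, \tilde x_-)$ of \eqref{prob_eq}, I set $\tilde x = \tilde x_+ - \tilde x_-$; then $e^T \tilde x = 0$, so $\tilde x$ is feasible for \eqref{prob}, and by the inequality
\[
f(\tilde x) = \f0(\tilde x_+ - \tilde x_-) + \lambda \norm{\tilde x}_1 \le \f0(\tilde x_+ - \tilde x_-) + \lambda e^T(\tilde x_+ + \tilde x_-),
\]
the value $f(\tilde x)$ would be strictly smaller than $f(x^*)$, contradicting optimality of $x^*$.

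The reverse direction uses the same two inputs in the other order. Given an optimal $(x_+^*,x_-^*)$ for \eqref{prob_eq}, set $x^* = x_+^* - x_-^*$; feasibility for \eqref{prob} is immediate from $e^T(x_+^* - x_-^*) = 0$, and the key inequality gives $f(x^*) \le \f0(x_+^* - x_-^*) + \lambda e^T(x_+^* + x_-^*)$, i.e., $f(x^*)$ is bounded above by the optimal value of \eqref{prob_eq}. For any competitor $\tilde x$ feasible for \eqref{prob}, the pair $(\max\{\tilde x,0\},\max\{-\tilde x,0\})$ is feasible for \eqref{prob_eq} with objective value exactly $f(\tilde x)$, so the optimal value of \eqref{prob_eq} is $\le f(\tilde x)$. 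Chaining the two bounds yields $f(x^*) \le f(\tilde x)$ for every feasible $\tilde x$.

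There is no genuine obstacle here; the only thing worth flagging is that in the reverse direction one must not assume $x_{+,i}^* x_{-,i}^* = 0$ (this is a consequence of optimality rather than a hypothesis), which is exactly why the argument routes through the inequality $\norm{x_+^* - x_-^*}_1 \le e^T(x_+^* + x_-^*)$ instead of equality.
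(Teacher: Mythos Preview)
Your proof is correct. The paper does not actually supply a proof of this lemma; it simply introduces it with the remark that ``it is easy to obtain the following equivalence'' and moves on. Your argument fills in exactly the standard details one would expect, and the key inequality $\norm{u-v}_1 \le e^T(u+v)$ (with equality when $u$ and $v$ have disjoint supports) is precisely the right tool. Your final remark about not presupposing $x_{+,i}^* x_{-,i}^* = 0$ in the reverse direction is a nice point of care.
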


Since problem~\eqref{prob_eq} is convex and all constraints are linear,
we can use KKT conditions and state that a point $(x_+^*,x_-^*)$ is an optimal solution of problem~\eqref{prob_eq} if and only if
there exist KKT multipliers $\sigmau^*, \sigmav^* \in \R^n$ and $\mu^* \in \R$
such that
\begin{align*}
& \nabla_{x_+} \f0(x_+^*-x_-^*) + \lambda e - \mu^* e - \sigmau^* = 0, \\
& \nabla_{x^-} \f0(x_+^*-x_-^*) + \lambda e + \mu^* e - \sigmav^* = 0, \\
& e^T(x_+^*-x_-^*) = 0, \\
& x_+^* \ge 0, \quad x_-^* \ge 0, \\
& \sigmau^* \ge 0, \quad \sigmav^* \ge 0, \\
& (\sigmau^*)^T x_+^* = 0, \quad (\sigmav^*)^T x_-^* = 0.
\end{align*}

From Lemma~\ref{lemma:opt_preq} and the fact that $\nabla_{x_+} \f0(x_+-x_-) = -\nabla_{x^-} \f0(x_+-x_-)$,
it follows that a point $x^*$ is an optimal solution of problem~\eqref{prob} if and only if there exist $\sigmau^*, \sigmav^* \in \R^n$ and $\mu^* \in \R$ such that
\begin{subequations}\label{kkt}
\begin{align}
& \nabla \f0(x^*) + \lambda e - \mu^* e - \sigmau^* = 0, \label{kkt_opt_prim_u} \\
& \nabla \f0(x^*) - \lambda e - \mu^* e + \sigmav^* = 0, \label{kkt_opt_prim_v} \\
& e^T x^* = 0, \\
& \sigmau^* \ge 0, \quad \sigmav^* \ge 0, \label{kkt_feas_dual} \\
& (\sigmau^*)_i \, x^*_i = 0, \quad \forall i \colon x^*_i > 0, \label{kkt_compl_u} \\
& (\sigmav^*)_i \, x^*_i = 0, \quad \forall i \colon x^*_i < 0. \label{kkt_compl_v}
\end{align}
\end{subequations}

The next theorem provides equivalent optimality conditions for problem~\eqref{prob}.
\begin{theorem}\label{th:opt}
Let $x^*$ be a feasible point of problem~\eqref{prob}. The following statements are equivalent:
\begin{enumerate}[label=(\alph*), leftmargin=*]
\item\label{th_opt_a} $x^*$ is an optimal solution of problem~\eqref{prob};
\item\label{th_opt_b} A scalar $\mu^*$ (the same as the one appearing in~\eqref{kkt}) exists such that
    \[
    \begin{cases}
    \nabla_i \f0(x^*) - \mu^* = \lambda, \quad & i \colon x^*_i < 0, \\
    \nabla_i \f0(x^*) - \mu^* = -\lambda, \quad & i \colon x^*_i > 0, \\
    \abs{\nabla_i \f0(x^*) - \mu^*} \le \lambda, \quad & i \colon x^*_i = 0;
    \end{cases}
    \]
\item\label{th_opt_c} $\eta^{\text{min}}(x^*) \ge \eta^{\text{max}}(x^*)$, where
    \begin{align*}
    \eta^{\text{min}}(x) & = \min_{i=1,\ldots,n} \bigl\{\nabla_i \f0(x) + \bigl(2\min\{\sgn(x_i),0\}+1\bigr)\lambda\bigr\}, \\
    \eta^{\text{max}}(x) & = \max_{i=1,\ldots,n} \bigl\{\nabla_i \f0(x) + \bigl(2\max\{\sgn(x_i),0\}-1\bigr)\lambda\bigr\}.
    \end{align*}
\end{enumerate}
\end{theorem}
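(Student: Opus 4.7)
The plan is to establish (a)~$\Leftrightarrow$~(b) and (b)~$\Leftrightarrow$~(c); both equivalences are essentially case analyses once the definitions are unpacked, so the argument is mostly bookkeeping rather than a deep new idea.

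For (a)~$\Leftrightarrow$~(b), I would start from the KKT system~\eqref{kkt}, which the excerpt has already identified as both necessary and sufficient for optimality of~\eqref{prob} (by convexity, linearity of the constraints, and Lemma~\ref{lemma:opt_preq}). Solving~\eqref{kkt_opt_prim_u}--\eqref{kkt_opt_prim_v} explicitly for the multipliers gives $(\sigmau^*)_i = \nabla_i \f0(x^*) + \lambda - \mu^*$ and $(\sigmav^*)_i = \mu^* + \lambda - \nabla_i \f0(x^*)$. Feeding these into the dual feasibility~\eqref{kkt_feas_dual} and the complementarity conditions~\eqref{kkt_compl_u}--\eqref{kkt_compl_v} and splitting on the sign of $x^*_i$ reproduces exactly the three relations in (b): when $x^*_i > 0$, $(\sigmau^*)_i = 0$ forces $\nabla_i \f0(x^*) - \mu^* = -\lambda$ and $(\sigmav^*)_i = 2\lambda \ge 0$ is automatic; the symmetric argument handles $x^*_i < 0$; for $x^*_i = 0$ only nonnegativity of both multipliers is required, which is precisely $\lvert \nabla_i \f0(x^*) - \mu^* \rvert \le \lambda$. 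Reading the derivation in reverse (defining $\sigmau^*, \sigmav^*$ from the same formulas) gives (b)~$\Rightarrow$~(a).

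For (b)~$\Leftrightarrow$~(c) I would first tabulate, for each of the cases $x^*_i > 0$, $x^*_i = 0$, $x^*_i < 0$, the values of the coefficients $2\min\{\sgn(x^*_i),0\}+1$ and $2\max\{\sgn(x^*_i),0\}-1$. The crucial symmetry is that when $x^*_i > 0$ both $\eta^{\text{min}}(x^*)$ and $\eta^{\text{max}}(x^*)$ include the same term $\nabla_i \f0(x^*) + \lambda$, when $x^*_i < 0$ both include $\nabla_i \f0(x^*) - \lambda$, and only when $x^*_i = 0$ do the two quantities receive distinct contributions, namely $\nabla_i \f0(x^*) + \lambda$ in $\eta^{\text{min}}$ and $\nabla_i \f0(x^*) - \lambda$ in $\eta^{\text{max}}$. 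Assuming (b), direct substitution shows every term appearing in $\eta^{\text{min}}(x^*)$ is at least $\mu^*$ and every term in $\eta^{\text{max}}(x^*)$ is at most $\mu^*$, yielding $\eta^{\text{min}}(x^*) \ge \mu^* \ge \eta^{\text{max}}(x^*)$, which is (c). Conversely, assuming (c), any index with $x^*_i \ne 0$ contributes a common term to both extrema and so pins $\eta^{\text{min}}(x^*) = \eta^{\text{max}}(x^*)$ to a common value that I would take as $\mu^*$; the three requirements in (b) then follow from the tabulation.

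The only mildly delicate point I foresee is the degenerate case $x^* = 0$ in the direction (c)~$\Rightarrow$~(b): no active index is available to pin down $\mu^*$, and one must instead choose $\mu^*$ in the interval $[\eta^{\text{max}}(x^*), \eta^{\text{min}}(x^*)]$, using the hypothesis in (c) exactly to guarantee this interval is nonempty. Beyond this, the proof is routine sign-case arithmetic.
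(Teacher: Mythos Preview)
Your proposal is correct and follows essentially the same route as the paper: the equivalence (a)~$\Leftrightarrow$~(b) is derived by solving the KKT system for $\sigmau^*,\sigmav^*$ and splitting on the sign of $x^*_i$, and (b)~$\Leftrightarrow$~(c) is obtained by tabulating the contributions to $\eta^{\min}$ and $\eta^{\max}$ and sandwiching $\mu^*$ between them. The only organizational difference is that for (c)~$\Rightarrow$~(b) the paper immediately picks any $\mu^*\in[\eta^{\max}(x^*),\eta^{\min}(x^*)]$ and then verifies all three sign cases uniformly, whereas you first use a nonzero index (when one exists) to force $\eta^{\min}(x^*)=\eta^{\max}(x^*)$ and treat $x^*=0$ separately; both arguments are valid and amount to the same computation.
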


\begin{proof}
We show the following implications.
\begin{enumerate}[align=left]
\item[$\bullet \; \textbf{\ref{th_opt_a}} \Rightarrow \textbf{\ref{th_opt_b}}$.]
    If $x^*$ is an optimal solution, then~\eqref{kkt} holds.
    If $x^*_i<0$, from~\eqref{kkt_feas_dual}--\eqref{kkt_compl_v} we have $(\sigmav^*)_i=0$ and, from~\eqref{kkt_opt_prim_v}, it follows that $\nabla_i \f0(x^*) - \mu^* = \lambda$.
    By the same arguments, if $x^*_i>0$ we have $(\sigmau^*)_i=0$ and $\nabla_i \f0(x^*) - \mu^* = -\lambda$.
    If $x^*_i=0$, from~\eqref{kkt_opt_prim_u}, \eqref{kkt_opt_prim_v} and~\eqref{kkt_feas_dual} we have $-\lambda \le \nabla_i \f0(x^*) - \mu^* \le \lambda$.
\item[$\bullet \; \textbf{\ref{th_opt_b}} \Rightarrow \textbf{\ref{th_opt_a}}$.]
    If~\ref{th_opt_b} holds, then the KKT system~\eqref{kkt} is satisfied by setting
    $\sigmau^* = \nabla \f0(x^*) + \lambda e - \mu^* e$ and $\sigmav^* = -\nabla \f0(x^*) + \lambda e + \mu^* e$, implying that $x^*$ is an optimal solution.
\item[$\bullet \; \textbf{\ref{th_opt_b}} \Rightarrow \textbf{\ref{th_opt_c}}$.]
    Let us rewrite $\eta^{\text{min}}(x^*)$ and $\eta^{\text{max}}(x^*)$ as follows:
    \begin{subequations}\label{eta}
    \begin{align}
    \eta^{\text{min}}(x^*) & = \min\Bigl\{\min_{i \colon x^*_i \ge 0} \nabla_i \f0(x^*) + \lambda, \, \min_{i \colon x^*_i<0} \nabla_i \f0(x^*) - \lambda\Bigr\}, \\
    \eta^{\text{max}}(x^*) & = \max\Bigl\{\max_{i \colon x^*_i \le 0} \nabla_i \f0(x^*) - \lambda, \, \max_{i \colon x^*_i>0} \nabla_i \f0(x^*) + \lambda\Bigr\}.
    \end{align}
    \end{subequations}
    If~\ref{th_opt_b} holds, we have
    \[
    \nabla_i \f0(x^*) + \lambda
    \begin{cases}
    \ge \nabla_i \f0(x^*) - \lambda = \mu^*, \quad & i \colon x^*_i < 0, \\
    = \mu^*, \quad & i \colon x^*_i > 0, \\
    \ge \mu^*, \quad & i \colon x^*_i = 0, \\
    \end{cases}
    \]
    and
    \[
    \nabla_i \f0(x^*) - \lambda
    \begin{cases}
    = \mu^*, \quad & i \colon x^*_i < 0, \\
    \le \nabla_i \f0(x^*) + \lambda = \mu^*, \quad & i \colon x^*_i > 0, \\
    \le \mu^*, \quad & i \colon x^*_i = 0. \\
    \end{cases}
    \]
    Therefore, $\eta^{\text{min}}(x^*) \ge \mu^* \ge \eta^{\text{max}}(x^*)$.
\item[$\bullet \; \textbf{\ref{th_opt_c}} \Rightarrow \textbf{\ref{th_opt_b}}$.]
    Let us consider $\eta^{\text{min}}(x^*)$ and $\eta^{\text{max}}(x^*)$ written as in~\eqref{eta}.
    If~\ref{th_opt_c} holds, let $\mu^*$ be any number in $[\eta^{\text{max}}(x^*),\eta^{\text{min}}(x^*)]$.
    If $\{i \colon x^*_i<0\} \ne \emptyset$, we can write
    \[
    \begin{split}
    \min_{i \colon x^*_i<0} \nabla_i \f0(x^*) - \lambda & \le \max_{i \colon x^*_i<0} \nabla_i \f0(x^*) - \lambda \le \max_{i \colon x^*_i \le 0} \nabla_i \f0(x^*) - \lambda \\
    & \le \eta^{\text{max}}(x^*) \le \mu^* \le \eta^{\text{min}}(x^*) \le \min_{i \colon x^*_i<0} \nabla_i \f0(x^*) - \lambda.
    \end{split}
    \]
    Therefore, all the inequalities in the above chain are actually equalities, implying that
    $\mu^* = \min_{i \colon x^*_i<0} \nabla_i \f0(x^*) - \lambda = \max_{i \colon x^*_i<0} \nabla_i \f0(x^*) - \lambda$. Namely,
    \[
    \mu^* = \nabla_i \f0(x^*) - \lambda, \quad \forall i \colon x^*_i < 0,
    \]
    and the first condition of~\ref{th_opt_b} is satisfied.
    Similarly, if $\{i \colon x^*_i>0\} \ne \emptyset$, we can write
    \[
    \begin{split}
    \max_{i \colon x^*_i>0} \nabla_i \f0(x^*) + \lambda & \ge \min_{i \colon x^*_i>0} \nabla_i \f0(x^*) + \lambda \ge \min_{i \colon x^*_i \ge 0} \nabla_i \f0(x^*) + \lambda \\
    & \ge \eta^{\text{min}}(x^*) \ge \mu^* \ge \eta^{\text{max}}(x^*) \ge \max_{i \colon x^*_i>0} \nabla_i \f0(x^*) + \lambda,
    \end{split}
    \]
    implying that
    \[
    \mu^* = \nabla_i \f0(x^*) + \lambda, \quad \forall i \colon x^*_i > 0,
    \]
    and the second condition of~\ref{th_opt_b} is satisfied.
    Finally, if $\{i \colon x^*_i = 0\} \ne \emptyset$, we can write
    \[
    \begin{split}
    \min_{i \colon x^*_i=0} \nabla_i \f0(x^*) + \lambda & \ge \min_{i \colon x^*_i \ge 0} \nabla_i \f0(x^*) + \lambda \ge
    \eta^{\text{min}}(x^*) \ge \mu^* \ge \eta^{\text{max}}(x^*) \\
    & \ge \max_{i \colon x^*_i \le 0} \nabla_i \f0(x^*) - \lambda \ge \max_{i \colon x^*_i=0} \nabla_i \f0(x^*) - \lambda,
    \end{split}
    \]
    implying that
    \[
    \nabla_i \f0(x^*) - \lambda \le \mu^* \le \nabla_i \f0(x^*) + \lambda, \quad \forall i \colon x^*_i = 0,
    \]
    and the third condition of~\ref{th_opt_b} is satisfied.
\end{enumerate}
\end{proof}

Note that, by point~\ref{th_opt_c} of Theorem~\ref{th:opt}, we can define $\eta^{\text{max}}(x) - \eta^{\text{min}}(x)$
as a measure of optimality violation.
Moreover, now we derive an expression for the KKT multiplier $\mu^*$ appearing in Theorem~\ref{th:opt},
which will be used in Section~\ref{sec:alg} to define an active-set estimate.

\begin{theorem}\label{th:mu}
If $x^* \ne 0$ is an optimal solution of problem~\eqref{prob},
then the corresponding multiplier $\mu^*$ appearing in point~\ref{th_opt_b} of Theorem~\ref{th:opt} is given by
\[
\mu^* = \frac{\displaystyle{\sum_{i \colon x^*_i \ne 0}|x^*_i|^p\bigl(\nabla_i \f0(x^*)+\lambda \sgn(x^*_i)\bigr)}}{\displaystyle{\sum_{i \colon x^*_i \ne 0}|x^*_i|^p}},
\quad \forall p \ge 0.
\]
\end{theorem}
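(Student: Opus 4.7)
The plan is to apply point~\ref{th_opt_b} of Theorem~\ref{th:opt} coordinatewise, to show that the quantity $\nabla_i \f0(x^*) + \lambda \sgn(x^*_i)$ is \emph{independent of} $i$ on the support of $x^*$ and equals $\mu^*$, after which the formula collapses to a weighted average of a constant.

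In detail, I would split the indices $i$ with $x^*_i \ne 0$ into two cases. If $x^*_i > 0$, then $\sgn(x^*_i) = 1$ and the second line of point~\ref{th_opt_b} yields $\nabla_i \f0(x^*) + \lambda = \mu^*$, i.e., $\nabla_i \f0(x^*) + \lambda \sgn(x^*_i) = \mu^*$. If instead $x^*_i < 0$, then $\sgn(x^*_i) = -1$ and the first line of point~\ref{th_opt_b} gives $\nabla_i \f0(x^*) - \lambda = \mu^*$, which is again $\nabla_i \f0(x^*) + \lambda \sgn(x^*_i) = \mu^*$. So the single identity
\[
\nabla_i \f0(x^*) + \lambda \sgn(x^*_i) = \mu^*, \quad \forall\, i \colon x^*_i \ne 0,
\]
holds uniformly on the support of $x^*$.

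To conclude, I would observe that, since $x^* \ne 0$, the index set $I^* = \{i \colon x^*_i \ne 0\}$ is non-empty and $|x^*_i|^p > 0$ for every $i \in I^*$ and every $p \ge 0$, so the denominator in the claimed expression is strictly positive. Multiplying the uniform identity above by $|x^*_i|^p$, summing over $i \in I^*$, and dividing by $\sum_{i \in I^*} |x^*_i|^p$ produces exactly the claimed formula: on the right-hand side the constant $\mu^*$ factors out of the sum and the weights cancel.

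There is no substantive obstacle here; the only step that requires attention is the sign bookkeeping in the two cases, since $\lambda \sgn(x^*_i)$ flips sign between positive and negative components while $\mu^*$ does not. The free parameter $p \ge 0$ plays no role in the proof itself---presumably it will matter later because the right-hand side is meant to be evaluated also at non-optimal iterates of the algorithm, where different choices of $p$ give different estimators of $\mu^*$, putting different amounts of weight on the large-magnitude coordinates.
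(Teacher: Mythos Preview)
Your proof is correct and, in fact, more direct than the paper's. You use point~\ref{th_opt_b} of Theorem~\ref{th:opt} to see immediately that $\nabla_i\f0(x^*)+\lambda\sgn(x^*_i)=\mu^*$ for every $i$ in the support of $x^*$, and then recognize the claimed expression as a weighted average of the constant $\mu^*$. The paper instead rewrites the KKT conditions as $\sigmau^*=\nabla\f0(x^*)+\lambda e-\mu^* e$ and $\sigmav^*=-\nabla\f0(x^*)+\lambda e+\mu^* e$, observes via complementarity that $\mu^*$ makes each of these vanish on the relevant index set, and packages this as saying that $\mu^*$ is the unique minimizer of the weighted least-squares function
\[
\mu \mapsto \frac12\sum_{i\colon x^*_i>0}(\nabla_i\f0(x^*)+\lambda-\mu)^2(x^*_i)^p + \frac12\sum_{i\colon x^*_i<0}(-\nabla_i\f0(x^*)+\lambda+\mu)^2(-x^*_i)^p,
\]
whose first-order condition then yields the formula. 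The two arguments use the same underlying identity; yours simply avoids the optimization detour. The paper's framing has the slight conceptual advantage of motivating the multiplier function~\eqref{mult_funct} as a least-squares estimate of $\mu^*$ (which is what one would compute at a non-optimal iterate), but for the proof of the theorem itself your route is the shorter one.
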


\begin{proof}
Consider the KKT multiplier $\mu^*$ appearing in~\eqref{kkt}, which is the same as the one appearing in point~\ref{th_opt_b} Theorem~\ref{th:opt}.
From~\eqref{kkt_opt_prim_u} and~\eqref{kkt_opt_prim_v}, we obtain
\begin{align*}
\sigmau^* & = \nabla \f0(x^*) + \lambda e - \mu^*e, \\
\sigmav^* & = -\nabla \f0(x^*) + \lambda e + \mu^*e.
\end{align*}
Using~\eqref{kkt_feas_dual}--\eqref{kkt_compl_v}, it follows that $\mu^*$ is the unique optimal solution of the following one-dimensional strictly convex problem,
for all $p \ge 0$:
\[
\min_{\mu} \frac 12 \Biggl[ \, \sum_{i \colon x^*_i > 0} (\nabla_i \f0(x^*) + \lambda - \mu)^2 (x^*_i)^p
 + \sum_{i \colon x^*_i < 0} (-\nabla_i \f0(x^*) + \lambda + \mu)^2 (-x^*_i)^p\biggr].
\]
To compute $\mu^*$ as the minimizer of the above univariate function, we have to set its derivative to $0$, that is,
\[
\mu^* = \frac{\displaystyle{\sum_{i \colon x^*_i > 0} (x^*_i)^p (\nabla_i \f0(x^*)+\lambda) + \sum_{i \colon x^*_i < 0} (-x^*_i)^p (\nabla_i \f0(x^*)-\lambda)}}
{\displaystyle{\sum_{i \colon x^*_i > 0} (x^*_i)^p + \sum_{i \colon x^*_i < 0} (-x^*_i)^p}},
\]
leading to the expression given in the assertion.
\end{proof}

Let us conclude this section by showing a regularity property of the non-zero feasible points of problem~\eqref{prob},
which will be useful in the global convergence analysis of the proposed algorithm.

\begin{proposition}\label{prop:cond_mu}
Let $\bar x$ be a feasible point of problem~\eqref{prob} such that $\bar x_j \ne 0$ for an index $j \in \{1,\ldots,n\}$.
If there exists $I \subseteq \{1,\ldots,n\}$ such that
\[
f(\bar x) = \min_{\xi \in \R} f(\bar x + \xi (e_i - e_j)), \quad \forall i \in I,
\]
then there exists $\bar \mu \in \R$ such that
\begin{equation}\label{cond_mu}
\begin{cases}
\nabla_i \f0(\bar x) - \bar \mu = \lambda, \quad & i \in I \colon \bar x_i < 0, \\
\nabla_i \f0(\bar x) - \bar \mu = -\lambda, \quad & i \in I \colon \bar x_i > 0, \\
\abs{\nabla_i \f0(\bar x) - \bar \mu} \le \lambda, \quad & i \in I \colon \bar x_i = 0.
\end{cases}
\end{equation}
\end{proposition}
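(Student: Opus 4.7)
The plan is to translate the one-dimensional stationarity assumption along each direction $e_i - e_j$ into a relation between the partial derivatives of $\f0$ at $\bar x$, and then show that a single scalar $\bar\mu$ can be extracted from the index $j$ that makes all these relations fit the pattern in~\eqref{cond_mu}.

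First I would fix $i \in I$ and consider the one-variable convex function $g_i(\xi) = f(\bar x + \xi(e_i - e_j))$. The assumption that $\bar x$ minimizes $g_i$ over $\R$ is equivalent to the subdifferential condition $0 \in \partial g_i(0)$, which (since $\f0$ is smooth and the only nonsmoothness of $f$ comes from $\lambda\lVert\cdot\rVert_1$) unfolds into $g_i'(0^-) \le 0 \le g_i'(0^+)$. Computing these one-sided derivatives along $e_i - e_j$ gives contributions of $\nabla_i\f0(\bar x) - \nabla_j\f0(\bar x)$ from the smooth part, plus contributions from $\lambda|x_i|$ and $\lambda|x_j|$. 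The crucial simplification is that $\bar x_j \ne 0$ by hypothesis, so the $j$-contribution is the single number $-\lambda\sgn(\bar x_j)$ with no sign ambiguity; only the $i$-contribution can be multi-valued (exactly when $\bar x_i = 0$).

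Next I would set
\[
\bar\mu := \nabla_j\f0(\bar x) + \lambda\sgn(\bar x_j)
\]
and verify the three cases. If $\bar x_i > 0$, both $g_i'(0^-)$ and $g_i'(0^+)$ equal $\nabla_i\f0(\bar x) + \lambda - \nabla_j\f0(\bar x) - \lambda\sgn(\bar x_j)$, so this must vanish, giving $\nabla_i\f0(\bar x) - \bar\mu = -\lambda$. If $\bar x_i < 0$, the analogous computation yields $\nabla_i\f0(\bar x) - \bar\mu = \lambda$. If $\bar x_i = 0$, the left derivative is $\nabla_i\f0(\bar x) - \lambda - \nabla_j\f0(\bar x) - \lambda\sgn(\bar x_j)$ and the right derivative is the same expression with $+\lambda$ instead of $-\lambda$; the inequalities $g_i'(0^-) \le 0 \le g_i'(0^+)$ together yield $|\nabla_i\f0(\bar x) - \bar\mu| \le \lambda$. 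These are precisely the three conditions in~\eqref{cond_mu}.

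The main obstacle is purely bookkeeping: the one-sided derivatives must be computed carefully when $\bar x_i = 0$, since then the right derivative corresponds to pushing $x_i$ into $(0,+\infty)$ (contributing $+\lambda$) while the left derivative pushes into $(-\infty,0)$ (contributing $-\lambda$). Because $\bar x_j \ne 0$, no such case split is needed at $j$, which is exactly what lets the index $j$ play the role of an anchor and produces a single well-defined $\bar\mu$. I would also note in passing that the case $i = j$, if it happens to belong to $I$, is consistent: the direction $e_i - e_j$ vanishes and the corresponding condition in~\eqref{cond_mu} is satisfied by construction of $\bar\mu$.
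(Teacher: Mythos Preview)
Your proposal is correct and follows essentially the same route as the paper: both proofs use the optimality of $\bar x$ along $\pm(e_i-e_j)$ to obtain the two directional-derivative inequalities, set $\bar\mu = \nabla_j\f0(\bar x) + \lambda\sgn(\bar x_j)$ (the paper writes this after assuming w.l.o.g.\ $\bar x_j>0$), and then check the three sign cases for $\bar x_i$. The only cosmetic difference is that you phrase the argument via one-sided derivatives of the scalar function $g_i$ and handle both signs of $\bar x_j$ uniformly with $\sgn(\bar x_j)$, whereas the paper writes out the directional derivatives $f'(\bar x;\pm(e_i-e_j))$ explicitly and treats only $\bar x_j>0$.
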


\begin{proof}
For any point $x \in \R^n$ and a (non-zero) direction $d \in \R^n$, let $f'(x;d)$ be the directional derivative of $f$ at $x$ along $d$.
We have
\begin{equation}\label{dir_der}
\begin{split}
f'(x;d) & = \lim_{\eps \to 0^+} \frac{f(x+\eps d)-f(x)}{\eps}
          = \nabla \f0(x)^T d + \lambda \lim_{\eps \to 0^+} \frac{\norm{x+\eps d}_1-\norm x_1}{\eps} \\
        & = \nabla \f0(x)^T d + \lambda \Biggl(\,\sum_{i=1}^n \sgn(x_i) \, d_i + \sum_{i \colon x_i = 0} \abs{d_i}\Biggr).
\end{split}
\end{equation}
Since $f(\bar x) = \min_{\xi \in \R} f(\bar x + \xi (e_i - e_j))$ for all $i \in I$ by hypothesis,
from the convexity of $f$ we get
\begin{equation}\label{dir_der_non_neg}
f'(\bar x; e_i - e_j) \ge 0 \quad \text{and} \quad f'(\bar x; e_j - e_i) \ge 0, \quad \forall i \in I.
\end{equation}
Without loss of generality, we assume that $\bar x_j>0$ (the proof for the case $\bar x_j<0$ is identical, except for minor changes).
Using~\eqref{dir_der}, we have
\begin{align}
f'(\bar x; e_i-e_j) = &
\begin{cases}
\nabla_i \f0(\bar x) - \nabla_j \f0(\bar x) - 2 \lambda, \quad & \text{if } \bar x_i < 0, \\
\nabla_i \f0(\bar x) - \nabla_j \f0(\bar x), \quad & \text{if } \bar x_i \ge 0;
\end{cases} \label{dir_der_1} \\
f'(\bar x; e_j-e_i) = &
\begin{cases}
\nabla_j \f0(\bar x) - \nabla_i \f0(\bar x) + 2 \lambda, \quad & \text{if } \bar x_i \le 0, \\
\nabla_j \f0(\bar x) - \nabla_i \f0(\bar x), \quad & \text{if } \bar x_i > 0.
\end{cases} \label{dir_der_2}
\end{align}
Therefore, in view of~\eqref{dir_der_non_neg}, for all $i \in I$ such that $\bar x_i \ne 0$ we can write
\[
\nabla_i \f0(\bar x) =
\begin{cases}
\nabla_j \f0(\bar x) + 2 \lambda, \quad & i \in I \colon \bar x_i<0, \\
\nabla_j \f0(\bar x), \quad & i \in I \colon \bar x_i>0.
\end{cases}
\]
So, we can set $\bar \mu = \nabla_j \f0(\bar x) + \lambda$ and the first two conditions of~\eqref{cond_mu} are satisfied.
Moreover, with this choice of $\bar \mu$ we have $\abs{\nabla_i \f0(\bar x) - \bar \mu} = \abs{\nabla_i \f0(\bar x) - \nabla_j \f0(\bar x) - \lambda}$.
So, to show that also the last condition of~\eqref{cond_mu} holds, we have to show that
\[
0 \le \nabla_i \f0(\bar x) - \nabla_j \f0(\bar x) \le 2 \lambda, \quad i \in I \colon \bar x_i = 0.
\]
This follows from~\eqref{dir_der_non_neg}, \eqref{dir_der_1} and~\eqref{dir_der_2}.
\end{proof}

\begin{remark}
In Proposition~\ref{prop:cond_mu}, if $I = \{1,\ldots,n\}$, then $\bar x$ is an optimal solution of problem~\eqref{prob},
according to point~\ref{th_opt_b} of Theorem~\ref{th:opt}.
\end{remark}

\section{The algorithm}\label{sec:alg}
In this section we propose a decomposition algorithm, named Active-Set Zero-Sum-Lasso (\ASZSL),
to efficiently solve problem~\eqref{prob}.

The underlying assumptions  motivating our approach are that
\textit{the optimal solutions are sparse}, due to the sparsity-promoting $\ell_1$ regularization,
and that \textit{the problem dimension is large}.
To face these issues, the proposed method relies on two main ingredients:
\begin{enumerate}[label=(\roman*)]
\item an \textit{active-set technique} to estimate the zero variables in the optimal solution,
\item a \textit{2-coordinate descent scheme} to update only the variables estimated to be non-zero in the optimal solution.
\end{enumerate}

In the field of constrained optimization, several active-set techniques were proposed to identify the active (or binding) constraints, see, e.g., \cite{andretta:2005,birgin:2002,cristofari:2017,cristofari:2020,cristofari:2022augmented,facchinei:1995,facchinei:1998,hager:2020,hager:2006,scheinberg:2006,schwartz:1997}.
Active-set techniques were successfully used also to identify the zero variables in $\ell_1$-regularized problems
\cite{byrd:2016,desantis:2016,keskar:2016,solntsev:2015,wen:2010,wen:2012}
and in $\ell_1$-constrained problems \cite{cristofari:2022minimization}.
Moreover, \textit{screening rules} were proposed to identify variables that can be discarded in lasso-type problems
\cite{ghaoui:2010,tibshirani:2012,xiang:2011,xiang:2012,wang:2013}
and \textit{shrinking techniques} have been widely used in some algorithms for machine learning problems to fix subsets of variables
\cite{chang:2011,hsieh:2008,joachims:1999,yuan:2010}.

Our approach makes use of the so-called \textit{multiplier functions}, which will be defined later,
adapting the active-set technique originally proposed in~\cite{facchinei:1995} for non-linearly constrained problems.

To explain our active-set technique,
let us first define the following index sets for any optimal solution $x^*$ of problem~\eqref{prob}:
\begin{align*}
\bar \A(x^*) & = \{i \colon x^*_i = 0\}, \\
\bar \N(x^*) & = \{1,\ldots,n\} \setminus \bar \A(x^*) = \{i \colon x^*_i \ne 0\}.
\end{align*}
We say that $\bar \A(x^*)$ and $\bar \N(x^*)$ represent the \textit{active set} and the \textit{non-active set}, respectively, at $x^*$.
In any point $x^k$ produced by the algorithm, we get estimates of $\bar \A(x^*)$ and $\bar \N(x^*)$
exploiting point~\ref{th_opt_b} of Theorem~\ref{th:opt}.
Namely, we set the vector $\pi^k$ as an approximation of $\nabla \f0(x^k) - \mu^* e$ and define
\begin{subequations}\label{estimate}
\begin{align}
\A^k & = \{i \colon x^k_i = 0, \, |\pi^k_i|\le \lambda \}, \label{active_set_estimate} \\
\N^k & = \{1,\ldots,n\} \setminus \A^k, \label{nonactive_set_estimate}
\end{align}
\end{subequations}
as estimates of $\bar \A(x^*)$ and $\bar \N(x^*)$, respectively.

Once $\A^k$ and $\N^k$ have been computed, we want to move only the variables estimated to be non-zero in the final solution,
i.e., the variables in $\N^k$, thus working in a lower dimensional space.
Moreover, to efficiently address large-scale problems, a 2-coordinate descent scheme is used for the variable update,
that is, we move two coordinates at a time (this is the minimum number of variables we can move to maintain feasibility, due to the equality constraint).

Given a feasible point $x^k \ne 0$ produced by the algorithm, in the sequel we define two possible strategies to compute $\pi^k$ in~\eqref{estimate}
and to update the variables.
The first strategy uses the whole gradient $\nabla \f0$ and is more accurate in the active-set estimate,
while the second strategy only uses partial derivatives and is computationally more efficient.
The rationale is trying to balance accuracy and computational efficiency,
in order to calculate the whole gradient vector $\nabla \f0$ only when needed.
In particular, in our algorithm we never compute the matrix $A^TA$, since this is impractical for large dimensions.
So, if the residual is known, $\mathcal O(m)$ operations are needed to compute a single partial derivative,
while $\mathcal O(mn)$ operations are needed to compute $\nabla \f0$.

\subsection{\MVP}\label{subsec:mvp}
In this first strategy, we need to compute the whole gradient $\nabla \f0(x^k)$.
Then, to obtain $\pi^k$ in~\eqref{estimate}, we use an approximation of the KKT multiplier $\mu^*$ by means of the so called multiplier functions.
More precisely, given a neighborhood $X$ of an optimal solution $x^*$,
we say that $\mu \colon X \to \R$ is a multiplier function if it is continuous in $x^*$ and such that
$\mu(x^*) = \mu^*$.
A class of multiplier functions can be straightforwardly obtained from Theorem~\ref{th:mu}. Namely, for all $x \ne 0$, we can define
\begin{equation}\label{mult_funct}
\mu(x) = \frac{\displaystyle{\sum_{i=1}^n|x_i|^p\bigl(\nabla_i \f0(x)+\lambda \sgn(x_i)\bigr)}}{\displaystyle{\sum_{i=1}^n|x_i|^p}}, \quad p>0.
\end{equation}
Once $\nabla \f0(x^k)$ and $\mu(x^k)$ have been computed, we can set
\begin{equation}\label{pi_k}
\pi^k = \nabla \f0(x^k) - \mu(x^k)e.
\end{equation}
With this choice of $\pi^k$ we have the following identification property,
ensuring that, if we are sufficiently close to an optimal solution $x^*$, then $i \in \A^k \Rightarrow x^*_i = 0$,
while the inverse implication (i.e., $x^*_i = 0 \Rightarrow i \in \A^k$) holds if $|\nabla_i \f0(x^*) - \mu^*| < \lambda$.

\begin{proposition}\label{prop:estimate}
Let $\A^k$ and $\N^k$ be defined as in~\eqref{estimate}, with $\pi^k$ computed as in~\eqref{pi_k}
and $\mu(x^k)$ computed as in~\eqref{mult_funct}.
Then, for any optimal solution $x^*$ of problem~\eqref{prob}, there exists a neighborhood $\mathcal B(x^*)$ such that
\[
\bar \A^+(x^*) \subseteq \A^k \subseteq \bar \A(x^*), \quad \forall x^k \in \mathcal B(x^*),
\]
where $\bar \A^+(x^*) = \{i \colon x^*_i = 0, |\nabla_i \f0(x^*) - \mu^*| < \lambda\}$ and $\mu^*$ is the KKT multiplier.
\end{proposition}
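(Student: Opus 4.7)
The plan is to prove the two inclusions $\A^k \subseteq \bar \A(x^*)$ and $\bar \A^+(x^*) \subseteq \A^k$ by continuity arguments on finitely many coordinates, after establishing the preliminary fact that the multiplier function $\mu(x)$ defined in~\eqref{mult_funct} is continuous at $x^*$ with $\mu(x^*) = \mu^*$. Assuming $x^* \ne 0$, the denominator $\sum_i |x_i|^p$ in~\eqref{mult_funct} is continuous and strictly positive at $x^*$, while the numerator is continuous because, for $p > 0$, the mapping $x_i \mapsto \sgn(x_i)|x_i|^p$ vanishes at $x_i = 0$ and extends continuously across it; the identity $\mu(x^*) = \mu^*$ then follows directly from Theorem~\ref{th:mu}. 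Combined with continuity of $\nabla \f0$, this gives $\pi^k \to \nabla \f0(x^*) - \mu^* e$ as $x^k \to x^*$.

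For the inclusion $\A^k \subseteq \bar \A(x^*)$, I would argue by contrapositive: if $i \notin \bar \A(x^*)$ then $x^*_i \ne 0$, and by continuity there is a ball around $x^*$ on which $x^k_i$ retains the sign of $x^*_i$, so $x^k_i \ne 0$ and $i \notin \A^k$ by the defining condition~\eqref{active_set_estimate}. For the inclusion $\bar \A^+(x^*) \subseteq \A^k$, each $i \in \bar \A^+(x^*)$ satisfies the strict inequality $|\nabla_i \f0(x^*) - \mu^*| < \lambda$; continuity of $\nabla \f0$ and $\mu$ then yields $|\pi^k_i| \le \lambda$ in a small enough neighborhood, which together with the condition $x^k_i = 0$ inherited from $x^*_i = 0$ places $i$ in $\A^k$. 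In both parts the required neighborhood is obtained by intersecting finitely many coordinate-wise neighborhoods, one for each index in $\bar \N(x^*)$ or $\bar \A^+(x^*)$ respectively.

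The main obstacle is confirming continuity of $\mu$ at $x^*$, since the formula is a ratio and explicitly involves $\sgn$, which is itself discontinuous at the origin. The key observation is that for $p > 0$ the product $\sgn(x_i)|x_i|^p$ is continuous at $x_i = 0$, so the apparent discontinuity is absorbed by the factor $|x_i|^p$; once this is clear, the rest of the argument reduces to a routine $\eps$-$\delta$ manipulation on finitely many components, where the strictness of $|\nabla_i \f0(x^*) - \mu^*| < \lambda$ in the definition of $\bar \A^+(x^*)$ provides exactly the slack needed to upgrade convergence of $\pi^k$ to the non-strict bound $|\pi^k_i| \le \lambda$.
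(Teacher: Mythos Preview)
Your overall approach matches the paper's: both proofs reduce everything to continuity of $\nabla\f0$ and of the multiplier function $\mu$, together with the identity $\mu(x^*)=\mu^*$ from Theorem~\ref{th:mu}. Your added justification that $x_i\mapsto |x_i|^p\sgn(x_i)$ is continuous through $0$ for $p>0$ (so that the numerator of~\eqref{mult_funct} is continuous despite the $\sgn$) is a detail the paper leaves implicit, and it is correct.

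There is, however, a genuine gap in your argument for the inclusion $\bar\A^+(x^*)\subseteq\A^k$. You write that $x^k_i=0$ is ``inherited from $x^*_i=0$'', but this is false: for an arbitrary $x^k$ in a neighborhood of $x^*$, the condition $x^*_i=0$ in no way forces $x^k_i=0$. Since membership in $\A^k$ (see~\eqref{active_set_estimate}) requires \emph{both} $x^k_i=0$ and $|\pi^k_i|\le\lambda$, continuity alone cannot deliver the first condition. Concretely, take any $i\in\bar\A^+(x^*)$ and perturb $x^*$ only in that coordinate; the resulting $x^k$ is as close to $x^*$ as you like but has $x^k_i\ne 0$, so $i\notin\A^k$. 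The paper's own proof is equally terse on this point---it invokes only continuity of $\nabla\f0$ and $\mu$ and does not address the $x^k_i=0$ requirement---so the statement should really be read either as a claim about iterates generated by the algorithm (which does set variables to exactly zero via the subproblem solves) or, more conservatively, as asserting only the $|\pi^k_i|\le\lambda$ half of the membership test. Either way, the step you flagged as routine is precisely the one that fails for general $x^k\in\mathcal B(x^*)$.
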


\begin{proof}
The inclusion $\A^k \subseteq \bar \A(x^*)$ is trivial, since $x^*_i \ne 0$ implies that, for all $x^k$ in a neighborhood of $x^*$,
we have $x^k_i \ne 0$ and then $i \notin \A^k$.
The inclusion $\bar \A^+(x^*) \subseteq \A^k$ follows from the continuity of $\nabla \f0(x)$ and the continuity of $\mu(x)$,
recalling that $\mu(x^*) = \mu^*$.
\end{proof}

To update the variables in $\N^k$, we use a 2-coordinate descent scheme based on the so called \textit{maximal violating pair} (MVP),
i.e., we move the two variables that most violate an optimality measure.
In the literature,
similar choices were considered for singly linearly constrained problems when the objective function is smooth,
using proper optimality measures (see, e.g.,~\cite{beck:2014,joachims:1999,lin:2001,palagi:2005,platt:1999}).
In our case, using point~\ref{th_opt_c} of Theorem~\ref{th:opt}, a maximal violating pair in $\N^k$ is defined as any pair
$(\ihat,\jhat)$ such that
\begin{subequations}\label{mvp}
\begin{align}
\ihat & \in \argmin_{i \in \N^k} \bigl\{\nabla_i \f0(x^k) + \bigl(2\min\{\sgn(x^k_i),0\}+1\bigr)\lambda\bigr\}, \\
\jhat & \in \argmax_{j \in \N^k} \bigl\{\nabla_j \f0(x^k) + \bigl(2\max\{\sgn(x^k_j),0\}-1\bigr)\lambda\bigr\}.
\end{align}
\end{subequations}

The next feasible point $x^{k+1}$ is then obtained by minimizing $f$ with respect to $x_{\ihat}$ and $x_{\jhat}$,
keeping all the other variables fixed in $x^k$, that is,
\[
x^{k+1} \in \argmin \{f(x) \colon e^T x = 0, \, x_h = x^k_h, h \in \{1,\ldots,n\} \setminus \{\ihat,\jhat\}\}.
\]
Equivalently,
\begin{equation}\label{subprob_mvp}
\begin{split}
& x^{k+1} = x^k + \xi^*(e_{\ihat}-e_{\jhat}), \\
& \xi^* \in \argmin_{\xi \in \R} \{f(x^k + \xi(e_{\ihat}-e_{\jhat}))\}.
\end{split}
\end{equation}

\subsection{\AC}\label{subsec:ac2cd}
This second strategy does not require to compute the whole gradient $\nabla \f0$.
In~\eqref{estimate} we simply set $\pi^k = \pi^{k-1}$ and then we update the variables in $\N^k$ by means of an \textit{almost cyclic rule}.
In particular, we extend a \mbox{2-coordinate} descent method, named AC2CD, proposed
in~\cite{cristofari:2019} and further analyzed in~\cite{cristofari:2022active}.
Considering singly linearly constrained problems with lower and upper bound on the variables,
the method proposed in~\cite{cristofari:2019} chooses two variables at a time, such that
one of them must be ``sufficiently far'' from the lower and the upper bound in some points produced by the algorithm,
while the other one is picked cyclically.
In order to adapt this approach to our setting, we first have to choose a variable index $j(k)$ such that
\begin{equation}\label{jk}
|x^k_{j(k)}| \ge \tau \norm{x^k}_{\infty}, \quad j(k) \in \N^k,
\end{equation}
where $\tau \in (0,1]$ is a fixed parameter.
Namely, we require $x^k_{j(k)}$ to be ``sufficiently different'' from zero.
Then, we start a cycle of inner iterations where we update two variables at a time.
In particular, first we set $z^{k,1} = x^k$ and choose a permutation $p^k_1,\ldots,p^k_{|\N^k|}$ of $\N^k$,
then we select one index $p^k_i$ at a time in a cyclic fashion, with $i = 1,\ldots,|\N^k|$,
and consider the index pair $(p^k_i,j(k))$.
We compute $z^{k,i+1}$ by minimizing $f(z)$ with respect to $z_{p^k_i}$ and $z_{j(k)}$,
keeping all the other variables fixed in $z^{k,i}$.
Namely,
\[
z^{k,i+1} \in \argmin \{f(z) \colon e^T z = 0, \, z_h = z^{k,i}_h, h \in \{1,\ldots,n\} \setminus \{p^k_i,j(k)\}\}.
\]
Equivalently,
\begin{equation}\label{subprob_ac2cd}
\begin{split}
& z^{k,i+1} = z^{k,i} + \xi^*(e_{p^k_i}-e_{j(k)}), \\
& \xi^* \in \argmin_{\xi \in \R} \{f(z^{k,i} + \xi(e_{p^k_i}-e_{j(k)}))\}.
\end{split}
\end{equation}
After producing the points $z^{k,1}$, $z^{k,2}$, \ldots, $z^{k,|\N^k|+1}$,
we set the next feasible point $x^{k+1} = z^{k,|\N^k|+1}$.

\subsection{Choosing between the two strategies}\label{subsec:choosing}
We have seen that, one the one hand, \AC\ does not need the expensive computation of the whole gradient $\nabla \f0$,
but, on the other hand, we expect the active-set estimate used in \MVP\ to be more accurate
in a neighborhood of an optimal solution, according to Proposition~\ref{prop:estimate}.
In order to balance computational efficiency and accuracy,
we want to use \MVP\ only when we judge it is worthwhile to compute a new gradient and a new $\pi^k$ in~\eqref{estimate}.
This occurs when we observe no sufficient progress in the objective function.
In particular, given a parameter $\theta \in (0,1]$, at iteration $k$ we use \MVP\ if both $f(x^{k-1})-f(x^k) \le \theta \max\{f(x^{k-1}),1\}$
and \MVP\ was not used in $x^{k-1}$, otherwise we use \AC.
As to be shown below, eventually the algorithm alternates between the two strategies.

The scheme of the proposed~\ASZSL\ method is reported in Algorithm~\ref{alg:asac2cdl1}.

\begin{algorithm}[h!]
\footnotesize
\caption{Active-Set Zero-Sum-Lasso (\texttt{\ASZSL})}
\label{alg:asac2cdl1}
\begin{algorithmic}
\par\vspace*{0.1cm}
\item[]\hspace*{-0.1truecm}$\,\,\,0$\vspace{0.1truecm}\hspace*{0.1truecm} \textbf{Given} $\theta \in (0,1]$ and $\tau \in (0,1]$, set $x^0 = 0$ and $k=0$
\item[]\hspace*{-0.1truecm}$\,\,\,1$\vspace{0.1truecm}\hspace*{0.1truecm} \textbf{If} $x^0$ is not optimal, compute $x^1$ such that $f(x^1) < f(x^0)$, set $k=1$
and go to line 4
\item[]
\item[]\hspace*{-0.1truecm}$\,\,\,2$\vspace{0.1truecm}\hspace*{0.1truecm} \textbf{While} $x^k$ is not optimal
\item[]
\item[]\hspace*{-0.1truecm}$\,\,\,3$\vspace{0.1truecm}\hspace*{0.6truecm} \mbox{\textbf{If} $\dfrac{f(x^{k-1})-f(x^k)}{\max\{f(x^{k-1}),1\}} \le \theta $
    and \MVP\ was not used for $x^{k-1}$}
\item[]
\item[]\hspace*{-0.1truecm}$\,\,\,\,\,\,$\vspace{0.1truecm}\hspace*{1.1truecm} \underline{\textit{\MVP}}
\item[]\hspace*{-0.1truecm}$\,\,\,4$\vspace{0.1truecm}\hspace*{1.1truecm} Compute $\A^k$ and $\N^k$ as in~\eqref{estimate}, with $\pi^k$ computed as in~\eqref{pi_k}
\item[]\hspace*{-0.1truecm}$\,\,\,5$\vspace{0.1truecm}\hspace*{1.1truecm} Compute a maximal violating pair $(\ihat,\jhat)$ as in~\eqref{mvp}
\item[]\hspace*{-0.1truecm}$\,\,\,6$\vspace{0.1truecm}\hspace*{1.1truecm} Compute $x^{k+1}$ as in~\eqref{subprob_mvp}
\item[]
\item[]\hspace*{-0.1truecm}$\,\,\,7$\vspace{0.1truecm}\hspace*{0.6truecm} \textbf{Else}
\item[]
\item[]\hspace*{-0.1truecm}$\,\,\,\,\,\,$\vspace{0.1truecm}\hspace*{1.1truecm} \underline{\textit{\AC}}
\item[]\hspace*{-0.1truecm}$\,\,\,8$\vspace{0.1truecm}\hspace*{1.1truecm} Compute $\A^k$ and $\N^k$ as in~\eqref{estimate}, with $\pi^k = \pi^{k-1}$
\item[]\hspace*{-0.1truecm}$\,\,\,9$\vspace{0.1truecm}\hspace*{1.1truecm} Choose a variable index $j(k)$ satisfying $\eqref{jk}$
\item[]\hspace*{-0.1truecm}$10$\vspace{0.1truecm}\hspace*{1.1truecm} Set $z^{k,1} = x^k$
\item[]\hspace*{-0.1truecm}$11$\vspace{0.1truecm}\hspace*{1.1truecm} Choose a permutation $\{p^k_1,\ldots,p^k_{|\N^k|}\}$ of $\N^k$
\item[]\hspace*{-0.1truecm}$12$\vspace{0.1truecm}\hspace*{1.1truecm} \textbf{For} $i = 1,\ldots,|\N^k|$
\item[]\hspace*{-0.1truecm}$13$\vspace{0.1truecm}\hspace*{1.6truecm} Compute $z^{k,i+1}$ as in~\eqref{subprob_ac2cd}
\item[]\hspace*{-0.1truecm}$14$\vspace{0.1truecm}\hspace*{1.1truecm} \textbf{End for}
\item[]\hspace*{-0.1truecm}$15$\vspace{0.1truecm}\hspace*{1.1truecm} Set $x^{k+1} = z^{k,|\N^k|+1}$
\item[]
\item[]\hspace*{-0.1truecm}$16$\vspace{0.1truecm}\hspace*{0.6truecm} \textbf{End if}
\item[]
\item[]\hspace*{-0.1truecm}$17$\vspace{0.1truecm}\hspace*{0.6truecm} Set $k = k + 1$
\item[]\hspace*{-0.1truecm}$18$\vspace{0.1truecm}\hspace*{0.1truecm} \textbf{End while}
\end{algorithmic}
\end{algorithm}

\begin{remark}\label{rem:subprob}
Both \MVP\ and \AC\ require to solve a subproblem for every variable update,
given in~\eqref{subprob_mvp} and~\eqref{subprob_ac2cd}, respectively.
We see that each subproblem consists
in an exact minimization of $f$ over a direction of the form $\pm(e_i - e_j)$.
A finite procedure for this minimization is described in Appendix~\ref{app:subproblem}.
\end{remark}

\section{Convergence analysis}\label{sec:conv}
In this section, we show the convergence of \ASZSL\ to optimal solutions,
using some results on the proposed active-set estimate and standard arguments on block coordinate descent methods~\cite{grippo:2000,luo:1992,tseng:2001}.

First, we note that most results stated in the above sections require $x^k \ne 0$.
Indeed, \ASZSL\ ensures that
\[
x^k \ne 0, \quad \forall k \ge 1,
\]
since $x^0 = 0$ and
\begin{equation}\label{f_k_decr}
f(x^{k+1}) \le f(x^k) < f(x^0) = f(0), \quad \forall k \ge 1.
\end{equation}
As a consequence, for all $k \ge 1$,
\begin{itemize}
\item the multiplier function $\mu(x)$ given in~\eqref{mult_funct} is well defined at $x^k$;
\item $\N^k \ne \emptyset$, according to~\eqref{estimate}.
\end{itemize}

Since $f$ is continuous and coercive, also the following result follows from~\eqref{f_k_decr}, ensuring the convergence
of $\{f(x^k)\}$ and the existence of limit points for $\{x^k\}$.
\begin{lemma}\label{lemma:conv_f_x}
Let $\{x^k\}$ be an infinite sequence of points produced by \ASZSL.
Then,
\begin{enumerate}[label=(\roman*)]
\item\label{lim_f_lemma_conv_f_x} $\displaystyle{\lim_{k \to \infty} f(x^k) = f^* \in \R}$, with $f^*>0$;
\item\label{lim_x_lemma_conv_f_x} every subsequence $\{x^k\}_{K \subseteq \mathbb N}$ has limit points, each of them being feasible and different from zero.
\end{enumerate}
\end{lemma}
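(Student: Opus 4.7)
The plan is to reduce both claims to the elementary observation that, by~\eqref{f_k_decr} combined with line~1 of the algorithm, the whole tail $\{x^k\}_{k\ge 1}$ lies in the feasible strict sublevel set
\[
\mathcal{L} = \{x \in \R^n \colon e^T x = 0, \ f(x) \le f(x^1)\},
\]
where $f(x^1) < f(0)$, and $\mathcal{L}$ is bounded by coercivity of $f$ and does not contain the origin. Both assertions fall out of this geometric picture once the correct order of proof is chosen.

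For part~\ref{lim_x_lemma_conv_f_x}, I would start from coercivity of $f$, which makes $\mathcal{L}$ bounded; hence $\{x^k\}_{k\ge 1}$ is bounded and every infinite subsequence $\{x^k\}_{k \in K}$ admits a convergent sub-subsequence. Given such a limit point $\bar x$, feasibility $e^T \bar x = 0$ is preserved by continuity of the linear functional $x \mapsto e^T x$ together with the fact that the algorithm maintains feasibility at every iteration. To rule out $\bar x = 0$, I would pass to the limit in the chain $f(x^k) \le f(x^1) < f(0)$ valid for all $k \ge 1$, and use continuity of $f$ to obtain $f(\bar x) < f(0)$, which contradicts $\bar x = 0$.

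For part~\ref{lim_f_lemma_conv_f_x}, the sequence $\{f(x^k)\}_{k\ge 1}$ is monotonically non-increasing by~\eqref{f_k_decr} and bounded below by $0$, so it converges to some $f^* \ge 0$. To upgrade the inequality to $f^* > 0$, I would take any limit point $\bar x$ produced by part~\ref{lim_x_lemma_conv_f_x}, invoke continuity of $f$ along the corresponding subsequence to identify $f(\bar x) = f^*$, and then exploit the lower bound $f(\bar x) \ge \lambda \norm{\bar x}_1$, which is strictly positive because $\bar x \ne 0$ and the regularization parameter is positive.

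I do not anticipate any serious obstacle. The only genuine subtlety is noticing that the strict positivity $f^* > 0$ is \emph{not} immediate from monotone convergence of $\{f(x^k)\}$ alone but instead piggybacks on the non-triviality of the limit points; this dictates proving \ref{lim_x_lemma_conv_f_x} first and then deducing \ref{lim_f_lemma_conv_f_x}. Everything else is routine: coercivity yields boundedness, \eqref{f_k_decr} yields monotone convergence, and continuity handles both feasibility and the exclusion of the origin as a limit point.
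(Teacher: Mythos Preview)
Your proposal is correct and matches the approach the paper sketches in the sentence preceding the lemma: the paper simply invokes continuity and coercivity of $f$ together with~\eqref{f_k_decr}, and your argument is a careful fleshing out of exactly that. The one point worth flagging is that your deduction of $f^*>0$ via $f(\bar x)\ge \lambda\norm{\bar x}_1$ tacitly uses $\lambda>0$, whereas the paper formally allows $\lambda\ge 0$; but the strict positivity claim in the lemma itself needs $\lambda>0$, so this is a harmless alignment with the intended setting rather than a gap in your reasoning.
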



Moreover, for any iteration $k$ where \AC\ is used, from the instructions of the algorithm we have
\[
f(x^{k+1}) = f(z^{k,|\N^k|+1}) \le f(z^{k,|\N^k|}) \le \ldots \le f(z^{k,1}) = f(x^k).
\]
So, still using ~\eqref{f_k_decr} and the fact that $f$ is continuous and coercive,
we have the following result. It ensures, over a subsequence of points produced by \AC,
the existence of limit points and the convergence of the objective function.
\begin{lemma}\label{lemma:conv_f_z_ac2cd}
Let $\{x^k\}_{K \subseteq \mathbb N}$ be an infinite subsequence of points produced by \ASZSL\ such that \AC\ is used for all $k \in K$.
Then, for every fixed $i \in \{1,\ldots,\displaystyle{\min_{k \in K}|\N^k|}+1\}$,
\begin{enumerate}[label=(\roman*)]
\item\label{lim_f_lemma_conv_f_z_ac2cd} $\displaystyle{\lim_{\substack{k \to \infty \\ k \in K}} f(z^{k,i}) = \lim_{k \to \infty} f(x^k) = f^* \in \R}$, with $f^*>0$;
\item\label{lim_z_lemma_conv_f_z_ac2cd} the subsequence $\{z^{k,i}\}_K$ has limit points, each of them being feasible and different from zero.
\end{enumerate}
\end{lemma}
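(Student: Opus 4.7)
The plan is to push the monotone-decrease argument used in Lemma~\ref{lemma:conv_f_x} one level inside the cycle of inner iterations of \AC, and then apply a squeeze argument together with Lemma~\ref{lemma:conv_f_x}. The structural key is that, by construction of \AC\ (lines~10--15 of Algorithm~\ref{alg:asac2cdl1}), the inner iterates satisfy $z^{k,1}=x^k$ and $z^{k,|\N^k|+1}=x^{k+1}$, and each subproblem~\eqref{subprob_ac2cd} is an exact minimization of $f$ along a direction, so
\[
f(x^{k+1}) = f(z^{k,|\N^k|+1}) \le f(z^{k,i}) \le f(z^{k,1}) = f(x^k), \quad i \in \{1,\ldots,|\N^k|+1\}.
\]

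For part~\ref{lim_f_lemma_conv_f_z_ac2cd}, I fix $i \in \{1,\ldots,\min_{k \in K}|\N^k|+1\}$ (so the term $z^{k,i}$ is well defined for every $k \in K$) and apply the squeeze theorem to the chain above along $k \in K$. Both end terms converge to $f^*$ by Lemma~\ref{lemma:conv_f_x}\ref{lim_f_lemma_conv_f_x}, forcing $f(z^{k,i}) \to f^*$; the strict positivity $f^*>0$ is inherited directly from the same lemma.

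For part~\ref{lim_z_lemma_conv_f_z_ac2cd}, existence of limit points follows from coercivity of $f$: since $f(z^{k,i}) \le f(x^1)$ for all $k \in K$ with $k \ge 1$, the sequence $\{z^{k,i}\}_K$ lies in the compact sublevel set $\{x \in \R^n \colon f(x) \le f(x^1)\}$, so the Bolzano--Weierstrass theorem applies. Feasibility of every limit point reduces to the observation that each inner update direction $e_{p^k_i}-e_{j(k)}$ is zero-sum, i.e.\ $e^T(e_{p^k_i}-e_{j(k)})=0$, so $e^T z^{k,i}=e^T z^{k,1}=e^T x^k=0$ throughout the whole inner cycle; the equality is preserved under limits. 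Finally, to exclude zero as a limit point I argue by contradiction: a subsequence $z^{k,i} \to 0$ would give $f(z^{k,i}) \to f(0)=0$ by continuity of $f$, contradicting $f(z^{k,i}) \to f^* > 0$ just established.

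The main obstacle is honestly minor, since everything chains mechanically from Lemma~\ref{lemma:conv_f_x} and the descent structure of \AC. The one bookkeeping point that needs care is verifying that the fixed index $i$ is admissible for every $k \in K$, which is precisely why the statement restricts $i$ to at most $\min_{k \in K}|\N^k|+1$.
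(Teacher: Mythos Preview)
Your approach matches the paper's: it does not spell out a proof but simply states that the result follows from~\eqref{f_k_decr} together with continuity and coercivity of $f$, which is exactly the squeeze/sublevel-set argument you give. The bookkeeping on the admissible range of $i$ and the preservation of feasibility along zero-sum directions is correct.

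There is, however, one concrete error in the final step. You write $f(0)=0$, but
\[
f(0) = \f0(0) + \lambda\norm{0}_1 = \tfrac12\norm{y}^2,
\]
which is not zero in general. Your contradiction ``$f(z^{k,i}) \to f(0)=0$ versus $f^*>0$'' therefore does not go through as written. The fix is immediate and already sits in~\eqref{f_k_decr}: since $f^* \le f(x^1) < f(x^0) = f(0)$, we have $f^* < f(0)$, so if a subsequence of $\{z^{k,i}\}_K$ converged to $0$, continuity of $f$ together with part~\ref{lim_f_lemma_conv_f_z_ac2cd} would force $f^* = f(0)$, a contradiction. Replace the appeal to ``$f(0)=0$'' by this strict inequality and the argument is complete.
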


Now, we show that \ASZSL\ cannot select \MVP\ or \AC\ for
an arbitrarily large number of consecutive iterations.
In particular, provided $\{x^k\}$ is infinite, eventually the algorithm alternates between the two strategies.

\begin{proposition}\label{prop:strategy}
Let $\{x^k\}$ be an infinite sequence of points produced by \ASZSL.
Then, there exists $\bar k$ such that
\MVP\ is used for $k = \bar k, \bar k + 2, \bar k + 4, \ldots$
and \AC\ is used for $k = \bar k + 1, \bar k + 3, \bar k + 5, \ldots$.
\end{proposition}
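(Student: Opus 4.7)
The plan is to rule out arbitrarily long runs of either strategy, and then combine these two facts with the fact that the sequence $\{x^k\}$ is infinite to force a strictly alternating pattern.

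First I would observe the easy direction: \MVP\ can never be used at two consecutive iterations. Indeed, the selection rule requires, as its second condition, that \MVP\ was \emph{not} used at the previous iteration, so if \MVP\ is used at iteration $k$, then \AC\ must be used at iteration $k+1$. This gives one half of the alternation for free, no matter how $f$ behaves.

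The core step is to show that, from some point on, two consecutive \AC\ iterations are impossible. Here the selection rule works in reverse: if \AC\ is used at iteration $k$ and also at iteration $k-1$, then \MVP\ was not used at $k-1$, so the second condition for using \MVP\ at $k$ does hold. Consequently, the only way \AC\ can be selected at $k$ is that the first (sufficient-progress) condition fails, i.e.
\[
f(x^{k-1})-f(x^k) > \theta \max\{f(x^{k-1}),1\} \ge \theta.
\]
Now Lemma~\ref{lemma:conv_f_x}\ref{lim_f_lemma_conv_f_x} tells us that $\{f(x^k)\}$ converges, so the telescoping sum $\sum_k [f(x^{k-1})-f(x^k)]$ is finite. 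Therefore there can be only finitely many indices $k$ at which $f(x^{k-1})-f(x^k) > \theta$, and hence only finitely many $k$ at which both $k-1$ and $k$ belong to the set of \AC\ iterations. Let $k_0$ be an index beyond which no two consecutive \AC\ iterations occur.

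Finally, I would argue that \MVP\ must in fact be used at infinitely many iterations: if \MVP\ were used only finitely often, then from some index on every iteration would be \AC, which would contradict the conclusion of the previous step. Pick any $\bar k \ge k_0+1$ at which \MVP\ is used. Then by the first observation $\bar k+1$ uses \AC; by the definition of $k_0$, $\bar k+2$ cannot again use \AC, so it must use \MVP; then $\bar k+3$ uses \AC\ by the first observation, and so on. A straightforward induction gives the claimed strict alternation starting from $\bar k$. The only delicate point is the inequality $\max\{f(x^{k-1}),1\} \ge 1$, which is what lets us replace a potentially small relative-progress threshold by the absolute constant $\theta$ and close the telescoping argument; this is the one place where numerical care is needed, and it is the main obstacle in an otherwise combinatorial proof.
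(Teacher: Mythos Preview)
Your proof is correct and follows essentially the same approach as the paper: both arguments use the convergence of $\{f(x^k)\}$ (Lemma~\ref{lemma:conv_f_x}) to conclude that the first condition in the test at line~3 eventually always holds, after which the second condition forces strict alternation. Your presentation is more explicit---you argue via the telescoping sum and the trivial bound $\max\{f(x^{k-1}),1\}\ge 1$, whereas the paper bounds the denominator below by $\max\{f^*,1\}$ and passes directly to the limit---but the underlying mechanism is identical.
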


\begin{proof}
From point~\ref{lim_f_lemma_conv_f_x} of Lemma~\ref{lemma:conv_f_x} and~\eqref{f_k_decr},
the sequence $\{f(x^k)\}$ converges to a value $f^*$ such that $f(x^k) \ge f^*$ for all $k \ge 0$.
It follows that
\[
\lim_{k \to \infty} \frac{f(x^{k-1})-f(x^k)}{\theta \max\{f(x^{k-1}),1\}} \le \lim_{k \to \infty} \frac{f(x^{k-1})-f(x^k)}{\theta \max\{f^*,1\}} = 0.
\]
Therefore, according to the test at line~3 of Algorithm~\ref{alg:asac2cdl1}, the algorithm alternates between \MVP\ and \AC\ infinitely
for sufficiently large~$k$.
\end{proof}

\subsection{Global convergence to optimal solutions}\label{subsec:conv}
Without loss of generality, for every index pair $(i,j)$ selected by \MVP\ or \AC,
now we require $f$ to be strictly convex along the directions $\pm(e_i-e_j)$.
In Appendix~\ref{app:strict_conv}, we show how this can be easily guaranteed.
Essentially, we just have to remove variables from the problem when we find identical columns in the matrix $A$.
Using this non-restrictive requirement, the following results show that $\norm{x^{k+1}-x^k} \to 0$.

\begin{proposition}\label{prop:lim_subseq_x_diff}
Let $\{x^k\}_{K_1 \subset \mathbb N}$ and $\{x^k\}_{K_2 \subset \mathbb N}$ be two infinite subsequences of points produced by~\ASZSL, such that
\MVP\ is used for all $k \in K_1$ and \AC\ is used for all $k \in K_2$. Then,
\begin{subequations}
\begin{align}
& \lim_{\substack{k \to \infty \\ k \in K_1}} \norm{x^{k+1}-x^k} = 0, \label{limxk1} \\
& \lim_{\substack{k \to \infty \\ k \in K_2}} \norm{x^{k+1} - x^k}
   = \lim_{\substack{k \to \infty \\ k \in K_2}} \sum_{i=1}^{|\N^k|} \norm{z^{k,i+1} - z^{k,i}} = 0. \label{limxk2}
\end{align}
\end{subequations}
\end{proposition}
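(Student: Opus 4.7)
My plan is to exploit the fact that on any subsequence the objective decrease per step vanishes (by Lemma~\ref{lemma:conv_f_x}\ref{lim_f_lemma_conv_f_x} and Lemma~\ref{lemma:conv_f_z_ac2cd}\ref{lim_f_lemma_conv_f_z_ac2cd}), and combine this with the exact one-dimensional minimization along $\pm(e_i-e_j)$, which by assumption is strictly convex. Strict convexity will force the step size to collapse to zero along any subsequential limit. A preliminary observation I would use throughout is that $\{x^k\}$ and every $\{z^{k,i}\}$ are bounded: this follows from coercivity of $f$ together with $f(x^{k+1}) \le f(x^k) \le f(x^0)$ and the analogous chain $f(z^{k,i+1}) \le f(z^{k,i}) \le f(x^k)$.

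For~\eqref{limxk1}, I would argue by contradiction. If the limit is not zero, there is $\delta>0$ and a subsequence $K_1' \subseteq K_1$ with $\norm{x^{k+1}-x^k}\ge\delta$ for all $k\in K_1'$. Since the set of possible index pairs $(\ihat,\jhat)$ is finite, refining $K_1'$ I may assume this pair is a constant $(i^*,j^*)$ along $K_1'$, and (by boundedness) that $x^k \to \bar x$ and $\xi^{*,k}\to\bar\xi$ with $\lvert \bar\xi\rvert \ge \delta/\sqrt 2 >0$, where $x^{k+1}=x^k+\xi^{*,k}(e_{i^*}-e_{j^*})$. Define $\varphi_k(\xi)=f(x^k+\xi(e_{i^*}-e_{j^*}))$ and $\varphi(\xi)=f(\bar x+\xi(e_{i^*}-e_{j^*}))$. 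By continuity of $f$ and Lemma~\ref{lemma:conv_f_x}\ref{lim_f_lemma_conv_f_x}, $\varphi_k(0)\to f(\bar x)$ and $\varphi_k(\xi^{*,k})\to f(\bar x)$ as well, so $\varphi(0)=\varphi(\bar\xi)$. Since $\xi^{*,k}$ minimizes $\varphi_k$, for any $\xi\in\R$ we also have $\varphi_k(\xi^{*,k})\le\varphi_k(\xi)$, and passing to the limit $\varphi(\bar\xi)\le\varphi(\xi)$, so $\bar\xi$ is a minimizer of $\varphi$. But $\varphi$ is strictly convex along the selected direction, so its minimizer is unique, and $\varphi(0)=\varphi(\bar\xi)$ forces $\bar\xi=0$, a contradiction.

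For~\eqref{limxk2}, since $|\N^k|\le n$ it suffices to show, for each fixed $i\in\{1,\ldots,n\}$, that $\norm{z^{k,i+1}-z^{k,i}}\to 0$ along the subsequence $K_2(i)=\{k\in K_2:\,|\N^k|\ge i\}$ (restricting attention to those $i$ for which this subset is infinite; for the others the contribution is eventually zero). Then the triangle inequality $\norm{x^{k+1}-x^k}\le\sum_{i=1}^{|\N^k|}\norm{z^{k,i+1}-z^{k,i}}$ yields the two equalities in~\eqref{limxk2}. On $K_2(i)$, monotonicity of $\{f(z^{k,\ell})\}_\ell$ together with Lemma~\ref{lemma:conv_f_z_ac2cd}\ref{lim_f_lemma_conv_f_z_ac2cd} applied to the endpoints $\ell=1$ and $\ell=|\N^k|+1$ implies $f(z^{k,i})$ and $f(z^{k,i+1})$ both converge to $f^*$, hence their difference vanishes. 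Because the pair $(p^k_i,j(k))$ belongs to the finite set $\{1,\ldots,n\}^2$, a subsequence extraction as in the MVP case allows me to fix this pair and to pass to a limit of $z^{k,i}$ (existence by Lemma~\ref{lemma:conv_f_z_ac2cd}\ref{lim_z_lemma_conv_f_z_ac2cd}). The same strict convexity argument then contradicts any nonzero subsequential limit of $\norm{z^{k,i+1}-z^{k,i}}$.

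The main obstacle I foresee is the bookkeeping for AC2CD: I must carefully extract a chain of nested subsequences along which simultaneously $(p^k_i,j(k))$ stabilizes, $z^{k,i}$ converges, and the corresponding step size converges; and I have to handle the fact that $|\N^k|$ is not constant, which I address by decomposing over the $n$ possible values of the inner index $i$. Once this indexing is organized, the strict convexity argument is uniform across both cases.
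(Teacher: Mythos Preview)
Your proposal is correct and follows essentially the same route as the paper: contradiction, extraction of a subsequence along which the index pair is fixed and the endpoints converge, then an appeal to strict convexity of $f$ along $\pm(e_i-e_j)$ to rule out a nonzero limiting step. The only cosmetic difference is that the paper contradicts strict convexity via the midpoint inequality $f\bigl((x'+x'')/2\bigr)<\tfrac12 f(x')+\tfrac12 f(x'')$, whereas you use the equivalent ``unique minimizer'' formulation; the AC2CD part and the bookkeeping over $|\N^k|$ are handled the same way.
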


\begin{proof}
By contradiction, assume that~\eqref{limxk1} does not hold.
It follows that there exists a subsequence $\{x^k\}_{K_3 \subseteq K_1}$
such that $\liminf_{k \to \infty, \, k \in K_3} \norm{x^{k+1} - x^k} > 0$.
By point~\ref{lim_x_lemma_conv_f_x} of Lemma~\ref{lemma:conv_f_x}, we can assume that both $\{x^k\}_{K_3}$ and $\{x^{k+1}\}_{K_3}$ converge
to feasible points (passing into a further subsequence if necessary), so that
\begin{equation}\label{lim_x}
\lim_{\substack{k \to \infty \\ k \in K_3}} x^k = x' \ne x'' = \lim_{\substack{k \to \infty \\ k \in K_3}} x^{k+1}.
\end{equation}
Since the set of variable indices is finite, we can also assume that the maximal violating pair $(\ihat,\jhat)$ is the same for all $k \in K_3$ (passing again into a further subsequence if necessary).
Using~\eqref{subprob_mvp}, \eqref{lim_x} and the continuity of $f$, we obtain
\[
x'' = x' + \xi^*(e_{\ihat}-e_{\jhat}), \quad \text{where} \quad \xi^* \in \argmin_{\xi \in \R} \{f(x' + \xi(e_{\ihat}-e_{\jhat}))\}.
\]
Namely, $x'$ and $x''$ belong to the line $\{x' + \xi(e_{\ihat}-e_{\jhat}), \, \xi \in \R\}$.
As said at the beginning of this subsection, without loss of generality here we require $f$ to be strictly convex along $\pm(e_{\ihat}-e_{\jhat})$
(in Appendix~\ref{app:strict_conv} it is shown how this can be easily guaranteed).
Since $x' \ne x''$, it follows that
%
\begin{equation}\label{ineq_fz}
f\Bigl(\dfrac{x'+x''}2\Bigr) < \dfrac 1 2 f(x')  + \dfrac 1 2 f(x'').
\end{equation}
Moreover, using again~\eqref{subprob_mvp} we can write
\[
f(x^{k+1}) \le f\Bigl(x^k + \frac 1 2 (x^{k+1}-x^k)\Bigr) = f\Bigl(\frac{x^k+x^{k+1}}2\Bigr) \le \frac{f(x^k)}2 + \frac{f(x^{k+1})}2,
\]
where the last inequality follows from the convexity of $f$.
Since $f(x^{k+1}) \le f(x^k)$, we obtain
\begin{equation}\label{ineq_exact_stepsize}
f(x^{k+1}) \le f\Bigl(\frac{x^k+x^{k+1}}2\Bigr) \le f(x^k).
\end{equation}
By~\eqref{lim_x} and point~\ref{lim_f_lemma_conv_f_x} of Lemma~\ref{lemma:conv_f_x}, using the continuity of $f$ we have that
\[
f(x') = \lim_{\substack{k \to \infty \\ k \in K_3}} f(x^k) =
\lim_{\substack{k \to \infty \\ k \in K_3}} f(x^{k+1}) = f(x'').
\]
Therefore, taking the limits in~\eqref{ineq_exact_stepsize}, we get
$f\Bigl(\dfrac{x'+x''}2\Bigr) = f(x') = f(x'')$, that is,
\[
f\Bigl(\dfrac{x'+x''}2\Bigr) = \dfrac 1 2 f(x')  + \dfrac 1 2 f(x''),
\]
contradicting~\eqref{ineq_fz}.

To show~\eqref{limxk2}, from the instructions of the algorithm we can write
$x^{k+1} - x^k = \sum_{i=1}^{|\N^k|} (z^{k,i+1} - z^{k,i})$ for all $k \in K_2$,
implying that
\[
\norm{x^{k+1} - x^k} \le \sum_{i=1}^{|\N^k|} \norm{z^{k,i+1} - z^{k,i}}, \quad \forall k \in K_2.
\]
So, to prove the desired result, we just have to show that
\[
\lim_{\substack{k \to \infty \\ k \in K_2}} \sum_{i=1}^{|\N^k|} \norm{z^{k,i+1} - z^{k,i}} = 0.
\]
Arguing by contradiction, assume that
there exist an index $\bar \imath \in \{1,\ldots,n\}$ and a subsequence $\{z^{k,\bar \imath}\}_{K_4 \subseteq K_2}$
such that $\liminf_{k \to \infty, \, k \in K_4} \norm{z^{k,\bar \imath+1} - z^{k,\bar \imath}} > 0$ for all $k \in K_4$.
By point~\ref{lim_z_lemma_conv_f_z_ac2cd} of Lemma~\ref{lemma:conv_f_z_ac2cd}, we can assume that both $\{z^{k,\bar \imath}\}_K$ and $\{z^{k,\bar \imath+1}\}_K$ converge
to feasible points (passing into a further subsequence if necessary).
Thus, we get a contradiction by the same arguments used to prove~\eqref{limxk1}.
\end{proof}

\begin{proposition}\label{prop:lim_x_diff}
Let $\{x^k\}$ be an infinite sequence of points produced by \ASZSL. Then,
\[
\lim_{k \to \infty} \norm{x^{k+1}-x^k} = 0.
\]
\end{proposition}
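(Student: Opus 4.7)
The plan is to deduce this directly by combining the two preceding results, namely Proposition~\ref{prop:strategy} and Proposition~\ref{prop:lim_subseq_x_diff}. Let $K_1 = \{k \colon \text{\MVP\ is used at iteration } k\}$ and $K_2 = \{k \colon \text{\AC\ is used at iteration } k\}$, so that $K_1 \cup K_2 = \mathbb{N}$. Since the sequence $\{x^k\}$ is infinite, at least one of $K_1$, $K_2$ is infinite, and by Proposition~\ref{prop:strategy} there exists $\bar k$ such that every integer $k \ge \bar k$ belongs to either $K_1$ or $K_2$ (in fact the strategies alternate from that point on). In particular, both $K_1$ and $K_2$ are either finite or share the alternating tail, so each subsequence that is infinite is covered by the conclusion of Proposition~\ref{prop:lim_subseq_x_diff}.

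First I would handle the trivial case in which one of $K_1$, $K_2$ is finite: then from some index onward all iterations belong to the other set, and the result follows immediately from the corresponding limit in Proposition~\ref{prop:lim_subseq_x_diff}. Second, in the generic case where both $K_1$ and $K_2$ are infinite, I would fix $\eps>0$ and use Proposition~\ref{prop:lim_subseq_x_diff} to obtain two indices $k_1, k_2$ such that $\norm{x^{k+1}-x^k}<\eps$ for all $k \in K_1$ with $k \ge k_1$ and for all $k \in K_2$ with $k \ge k_2$. Taking $k^* = \max\{\bar k, k_1, k_2\}$, every $k \ge k^*$ lies in $K_1 \cup K_2$ and satisfies $\norm{x^{k+1}-x^k}<\eps$. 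This gives the claim.

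There is essentially no main obstacle here; the substantive analytical work was already done in Proposition~\ref{prop:lim_subseq_x_diff} (strict convexity of $f$ along $\pm(e_i-e_j)$, continuity of $f$, and the monotone decrease of $f(x^k)$), and Proposition~\ref{prop:strategy} guarantees that the index set $\mathbb N \setminus (K_1 \cup K_2)$ is empty from some point on. The only thing worth being careful about is the edge case where one of the two strategies is used only finitely many times, which is handled separately as above.
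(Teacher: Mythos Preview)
Your proposal is correct and follows essentially the same approach as the paper: define $K_1$ and $K_2$ as the index sets where \MVP\ and \AC\ are used (so $K_1 \cup K_2 = \mathbb N$ by construction), invoke Proposition~\ref{prop:strategy} to ensure both are infinite, and then combine the two limits from Proposition~\ref{prop:lim_subseq_x_diff}. Your separate treatment of the case where one of $K_1$, $K_2$ is finite is harmless but superfluous, since Proposition~\ref{prop:strategy} already forces the two strategies to alternate eventually and hence both index sets to be infinite.
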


\begin{proof}
From Proposition~\ref{prop:strategy}, two infinite subsequences $\{x^k\}_{K_1 \subset \mathbb N}$ and  $\{x^k\}_{K_2 \subset \mathbb N}$ exist such that
\MVP\ is used for all $k \in K_1$ and \AC\ is used for all $k \in K_2$,
with $K_1 \cup K_2 = \mathbb N$.
Then, the desired result follows from Proposition~\ref{prop:lim_subseq_x_diff}.
\end{proof}

We are finally ready to show global convergence of \ASZSL\ to optimal solutions.

\begin{theorem}\label{th:conv}
Let $\{x^k\}$ be an infinite sequence of points produced by \ASZSL.
Then, $\{x^k\}$ has limit points and every limit point is an optimal solution of problem~\eqref{prob}.
\end{theorem}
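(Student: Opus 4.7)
The plan is to take any limit point $\bar x$ of $\{x^k\}$ and verify condition~\ref{th_opt_b} of Theorem~\ref{th:opt} at $\bar x$. Existence of limit points is immediate from point~\ref{lim_x_lemma_conv_f_x} of Lemma~\ref{lemma:conv_f_x}, which also guarantees feasibility and $\bar x \ne 0$; the nonzero property is important because it keeps the multiplier function $\mu(\cdot)$ in~\eqref{mult_funct} continuous at $\bar x$.

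First I would reduce to a subsequence $K$ along which $x^k \to \bar x$, iteration $k$ uses \AC, and iteration $k-1$ uses \MVP. This relies on Proposition~\ref{prop:strategy} (eventual alternation) together with Proposition~\ref{prop:lim_x_diff} (which allows shifting the index by one, should the given subsequence live on \MVP\ iterations). Finiteness of the possible index sets and permutations then lets me further thin $K$ so that $j(k) = \jbar$, $\N^k = \bar\N$ and the cyclic permutation of $\bar\N$ are all constant along $K$; rule~\eqref{jk} combined with $\bar x \ne 0$ forces $\bar x_{\jbar} \ne 0$.

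The central step is to pull the exact line minimizations inside \AC\ back to $\bar x$. From~\eqref{limxk2} every inner increment $\|z^{k,i+1}-z^{k,i}\|$ vanishes, so each $z^{k,i}$ converges to $\bar x$ as well. Writing $f(z^{k,i+1}) \le f(z^{k,i}+\xi(e_{p_i}-e_{\jbar}))$ from~\eqref{subprob_ac2cd} for arbitrary $\xi$ and taking $k \to \infty$ using continuity of $f$ gives $f(\bar x) = \min_{\xi \in \R} f(\bar x + \xi(e_i - e_{\jbar}))$ for every $i \in \bar\N$. Proposition~\ref{prop:cond_mu} applied with $I = \bar\N$ and $j = \jbar$ then supplies a scalar $\bar\mu$ satisfying~\eqref{cond_mu} on $\bar\N$.

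The main obstacle, and the closing step, is extending the KKT-type condition to indices $i \notin \bar\N$. Since $\{i \colon \bar x_i \ne 0\} \subseteq \bar\N$, the first two lines of~\eqref{cond_mu} give $\bar\mu = \nabla_i \f0(\bar x) + \lambda\sgn(\bar x_i)$ at every nonzero component, and the weighted-average computation from the proof of Theorem~\ref{th:mu} collapses to $\bar\mu = \mu(\bar x)$ because the zero components contribute nothing to the sums in~\eqref{mult_funct}. For $i \notin \bar\N$ I have $x^k_i = 0$ and $|\pi^k_i| \le \lambda$; since $\pi^k = \pi^{k-1}$ in \AC\ and $k-1$ runs \MVP, one gets $\pi^k_i = \nabla_i \f0(x^{k-1}) - \mu(x^{k-1})$, with $x^{k-1} \to \bar x$. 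Continuity of $\nabla\f0$ and of $\mu(\cdot)$ at $\bar x \ne 0$ then passes $|\pi^k_i| \le \lambda$ to $|\nabla_i \f0(\bar x) - \bar\mu| \le \lambda$, completing the third case of point~\ref{th_opt_b} of Theorem~\ref{th:opt} and proving optimality of $\bar x$.
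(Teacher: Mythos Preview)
Your proposal is correct and follows essentially the same route as the paper's proof: reduce to a subsequence on \AC\ iterations with preceding \MVP\ (via Proposition~\ref{prop:strategy} and Proposition~\ref{prop:lim_x_diff}), freeze $\N^k$, $j(k)$ and the permutation, pass the inner line minimizations to the limit using~\eqref{limxk2}, apply Proposition~\ref{prop:cond_mu} on $\bar\N$, identify $\bar\mu$ with $\mu(\bar x)$ via the weighted-average argument, and close the active indices by continuity of $\pi^{k-1}$. The only cosmetic difference is that the paper first names the limits $\bar z^i$ of the inner iterates and then shows $\bar z^i = x^*$, whereas you go directly from~\eqref{limxk2} to $z^{k,i}\to\bar x$; the content is the same.
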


\begin{proof}
From point~\ref{lim_x_lemma_conv_f_x} of Lemma~\ref{lemma:conv_f_x}, $\{x^k\}$ has limit points, each of them being feasible and different from zero.
Let $x^*$ be any of these limit points and let $\{x^k\}_{K \subseteq \mathbb N}$ be a subsequence converging to $x^*$.
In view of Proposition~\ref{prop:lim_x_diff}, $x^*$ is also a limit point of $\{x^{k+1}\}_K$ and of $\{x^{k-1}\}_K$. Namely,
\begin{equation}\label{lim_x_opt}
\lim_{\substack{k \to \infty \\ k \in K}} x^k = \lim_{\substack{k \to \infty \\ k \in K}} x^{k+1} = \lim_{\substack{k \to \infty \\ k \in K}} x^{k-1} = x^* \ne 0.
\end{equation}
Using Proposition~\ref{prop:strategy},
without loss of generality we can assume that \AC\ is used for all $k \in K$.
(In particular, if \AC\ is used for infinitely many $k \in K$, we can simply discard from $\{x^k\}_K$ the indices $k$ where \AC\ is not used.
On the contrary, if $\bar k$ exists such that \MVP\ is used for all $k \ge \bar k$, $k \in K$, then we can consider the subsequence $\{x^{k+1}\}_K$ instead.
This subsequence still converges to $x^*$ by~\eqref{lim_x_opt} and, for all sufficiently large $k \in K$,
\AC\ is used for all $k+1$ in view of Proposition~\ref{prop:strategy}.)

Since the set of variable indices is finite, for all $k \in K$ we can assume that $\A^k$, $\N^k$, $j(k)$ and $p^k_i$ are the same, that is,
\[
\A^k = \A, \qquad \N^k = \N, \qquad j(k) = \jbar, \qquad p^k_i = p_i, \quad \forall i \in \{1,\ldots,|\N|\}
\]
(passing into a further subsequence if necessary) and, by point~\ref{lim_z_lemma_conv_f_z_ac2cd} of Lemma~\ref{lemma:conv_f_z_ac2cd}, that
\begin{equation}\label{lim_z_i}
\lim_{\substack{k \to \infty \\ k \in K}} z^{k,i} = \bar z^i, \quad \forall i \in \{1,\ldots,|\N|+1\}
\end{equation}
(passing again into a further subsequence if necessary).

%
By continuity of $f$, we can take the limits in~\eqref{subprob_ac2cd} and, using~\eqref{lim_z_i}, for all $i \in \{1,\ldots,|\N|\}$ we have
\begin{equation}\label{z_i}
\begin{split}
& \bar z^{i+1} = \bar z^i + \xi^*(e_{p_i}-e_{\jbar}), \\
& \xi^* \in \argmin_{\xi \in \R} \{f(\bar z^i + \xi(e_{p_i}-e_{\jbar}))\}.
\end{split}
\end{equation}

Now, we show that
\begin{equation}\label{z_xstar}
\bar z^i = x^*, \quad \forall i \in \{1,\ldots,|\N|+1\}.
\end{equation}
From the instructions of the algorithm we have $z^{k,1} = x^k$, so we can write
$\norm{z^{k,i} - x^k} \le \sum_{h=1}^{i-1}\norm{z^{k,h+1}-z^{k,h}}$ for all $i \in \{1,\ldots,|\N|+1\}$.
Using~\eqref{limxk2}, it follows that
\begin{equation}\label{z_xk}
\lim_{\substack{k \to \infty \\ k \in K}} \norm{z^{k,i}-x^k} = 0, \quad \forall i \in \{1,\ldots,|\N|+1\}.
\end{equation}
Since $\norm{z^{k,i}-x^*} \le \norm{z^{k,i}-x^k}+ \norm{x^k-x^*}$, from~\eqref{lim_x_opt} and~\eqref{z_xk} we get
\[
\lim_{\substack{k \to \infty \\ k \in K}} \norm{z^{k,i}-x^*} = 0, \quad \forall i \in \{1,\ldots,|\N|+1\}.
\]
Using~\eqref{lim_z_i}, we thus obtain~\eqref{z_xstar}.

Therefore, from~\eqref{z_i} and~\eqref{z_xstar} it follows that
\begin{equation}\label{xstar}
f(x^*) = \min_{\xi \in \R} f(x^* + \xi(e_i-e_{\jbar})), \quad \forall i \in \N.
\end{equation}

Using~\eqref{lim_x_opt}, a real number $\eta>0$ exists such that $\|x^k\|_{\infty} \ge \eta/\tau$ for all sufficiently large $k \in K$,
where $\tau \in (0,1]$ is the parameter used in \ASZSL\ such that $|x^k_{\jbar}| \ge \tau \norm{x^k}_{\infty}$ (see line~9 of Algorithm~\ref{alg:asac2cdl1}).
Consequently, for all sufficiently large $k \in K$, we have $|x^k_{\jbar}| \ge \eta$, and then,
\begin{equation}\label{xstarj}
x^*_{\jbar} \ne 0.
\end{equation}
So, using~\eqref{xstar}, \eqref{xstarj} and Proposition~\ref{prop:cond_mu}, there exists $\mu^* \in \R$ such that
\begin{equation}\label{opt_N}
\begin{cases}
\nabla_i \f0(x^*) - \mu^* = \lambda, \quad & i \in \N \colon x^*_i < 0, \\
\nabla_i \f0(x^*) - \mu^* = -\lambda, \quad & i \in \N \colon x^*_i > 0, \\
\abs{\nabla_i \f0(x^*) - \mu^*} \le \lambda, \quad & i \in \N \colon x^*_i = 0.
\end{cases}
\end{equation}

Now, from the active-set estimate~\eqref{active_set_estimate} we observe that $i \in \A \Rightarrow x^k_i = 0$ for all $k \in K$.
Since $\{x^k\}_K \to x^*$ from~\eqref{lim_x_opt}, it follows that
\begin{equation}\label{A_subset}
\A \subseteq \{i \colon x^*_i = 0\}.
\end{equation}
Taking into account~\eqref{opt_N} and~\eqref{A_subset}, according to point~\ref{th_opt_b} of Theorem~\ref{th:opt}
we thus have to show that
\begin{equation}\label{opt_A}
\abs{\nabla_i \f0(x^*) - \mu^*} \le \lambda, \quad \forall i \in \A,
\end{equation}
in order to prove that $x^*$ is optimal and conclude the proof.
Note that, from~\eqref{opt_N} and~\eqref{A_subset},
we can write $|x^*_i|^p\bigl(\nabla_i \f0(x^*)+\lambda \sgn(x^*_i)\bigr) = |x^*_i|^p \mu^*$, $i = 1,\ldots,n$.
This implies that
\[
\sum_{i=1}^n |x^*_i|^p\bigl(\nabla_i \f0(x^*)+\lambda \sgn(x^*_i)\bigr) = \mu^* \sum_{i=1}^n |x^*_i|^p.
\]
So, using the definition of the multiplier function $\mu(x)$ given in~\eqref{mult_funct}, we get
\begin{equation}\label{mu_x_star}
\mu^* = \mu(x^*).
\end{equation}
Moreover, Proposition~\ref{prop:strategy} ensures that, for sufficiently large $k \in K$, \MVP\ is used at $x^{k-1}$,
implying that $\pi^k = \pi^{k-1} = \nabla \f0(x^{k-1}) - \mu(x^{k-1})e$ (see lines~8 and~4 of Algorithm~\ref{alg:asac2cdl1}).
Therefore, from the active-set estimate~\eqref{active_set_estimate}, for all sufficiently large $k \in K$ we can write
\[
\A = \{i \colon x^k_i = 0, \, |\nabla_i \f0(x^{k-1}) - \mu(x^{k-1})| \le \lambda\}.
\]
Since $\{x^k\}_K$ and $\{x^{k-1}\}_K$ converge to $x^*$ from~\eqref{lim_x_opt}, using the continuity of $\nabla \f0$, the continuity of
the multiplier function $\mu(x)$ and~\eqref{mu_x_star}, we can take the limits for $k \to \infty$, $k \in K$, and we finally get~\eqref{opt_A}.
\end{proof}

\section{Numerical results}\label{sec:res}
In this section, we report the numerical results obtained on synthetic and real datasets with compositional data.
We implemented \ASZSL\ in C++, using a MEX file to call the algorithm from Matlab.
The~\ASZSL\ software is available at~\url{https://github.com/acristofari/as-zsl}.

In our experiments, we use $p=1$ for the multiplier functions defined in~\eqref{mult_funct} and $\tau = 1$.
Moreover, $\theta$ is set to $10^{-2}$ at the beginning of the algorithm
and is gradually decreased to $10^{-6}$.
All tests were run on an Intel(R) Core(TM) i7-9700 with $16$ GB RAM memory.

We compared \ASZSL\ with the following algorithms:
\begin{itemize}
\item \compCL~\cite{lin:2014}, which solves~\eqref{prob} by the method of multiplier minimizing the augmented Lagrangian function by cyclic coordinate descent.
    The code was written in C++ and called from R.
    It was downloaded from \url{https://cran.r-project.org/package=Compack} as part of the R package Compack.
\item \QP~\cite{gaines:2018}, which solves the quadratic reformulation~\eqref{prob_eq}.
    Since $n>m$, a ridge term $10^{-4} \norm x ^2$ was added to the original objective function, as suggested in~\cite{gaines:2018}.
    The code was downloaded from \url{https://github.com/Hua-Zhou/SparseReg},
    it builds the problem via Matlab and uses the Gurobi Optimizer (version 9.5)~\cite{gurobi} for the minimization.
\item \zeroSum~\cite{altenbuchinger:2017}, which uses an extension of the random coordinate descent method to solve~\eqref{prob}.
    The code was written in C++ and called from R.
    It was downloaded from \url{https://github.com/rehbergT/zeroSum} as part of the R package zeroSum.
\end{itemize}

These algorithms were run with their default parameters and options, except those specified above.
We observe that both \compCL\ and \zeroSum\ use a (block) coordinate descent approach and were specifically designed for regression problems with zero-sum constraint,
while \QP\ uses the default algorithm implemented in Gurobi for quadratic programs, i.e., the barrier algorithm.

The results obtained from the comparisons are described in Subsection~\ref{subsec:synt_res} and~\ref{subsec:real_res}.
Finally, in Subsection~\ref{subsec:grid_res} we show how a warm start strategy can be used in \ASZSL\
to solve a sequence of problems with decreasing regularization parameters.

\subsection{Synthetic datasets}\label{subsec:synt_res}
In the first experiments, we generated some synthetic datasets for log-contrast model as suggested in~\cite{lin:2014}, using the function \textit{comp\_Model} implemented in the Compack package.
More specifically, a matrix $M \in \R^{m \times n}$ was first generated from a multivariate normal distribution $N(\omega,\Sigma)$.
To model the presence of five major components in the composition,
the vector $\omega$ has all zeros except for $\omega_i = \log(0.5n)$, $i=1,\ldots,5$.
The matrix $\Sigma$ has $0.5^{|i-j|}$ in position $(i,j)$.
Then, the log-contrast model~\eqref{log_contrast_model} was obtained by setting the matrix $A$ such that
$A_{ij} = \log(Z_{ij})$ in position $(i,j)$, with
\[
Z_{ij} = \frac{\displaystyle{e^{M_{ij}}}}{\displaystyle{\sum_{h=1}^n e^{M_{ih}}}}, \quad i = 1,\ldots,m, \quad j = 1,\ldots,n,
\]
the vector of regression coefficients $x = (1, -0.8, 0.6, 0, 0, -1.5, -0.5, 1.2, 0, \ldots, 0)^T$
and the noise terms in $\eps$ were generated from a normal distribution $N(0,0.5^2)$.
We see that $x$ has six non-zero coefficients, with three of them being among the five major components.

We considered problems with dimensions $m = 2000$ and $n \in \{m,2m,5m\}$.
For every pair $(m,n)$ we generated $10$ different datasets and, for each of them, we used $\lambda \in \{\lambda_1,\ldots,\lambda_5\}$
such that $\lambda_1,\ldots,\lambda_5$ are logarithmically equally spaced between $10^{\lambda_1}$ and $10^{\lambda_5}$,
where
\[
\lambda_1 = 0.95 \lambda_{\text{max}}, \quad \lambda_5 = 10^{-3} \lambda_{\text{max}}
\]
and $\lambda_{\text{max}}$ is such that $x^* = 0$ is an optimal solution of problem~\eqref{prob}
if and only if $\lambda \ge \lambda_{\text{max}}$.
We can easily compute $\lambda_{\text{max}}$ from point~\ref{th_opt_c} of Theorem~\ref{th:opt}
(it also follows from Corollary~1 of~\cite{jeon:2020}):
\[
\lambda_{\text{max}} = \frac{\displaystyle{\max_{j=1,\ldots,n} \, [(A^T)y]_j - \min_{i=1\ldots,n} \, [(A^T)y]_i}}2.
\]

In Table~\ref{tab:res_synt1},
we show the results obtained with the different problem dimensions.
In particular, for each considered $\lambda$, we report the average values over the 10 runs
in terms of final objective value and CPU time.
We see that \ASZSL\ always took less $2$ seconds on average to solve all the considered problems,
being much faster than the other methods and also achieving the lowest objective function value.
In particular, \ASZSL\ is one or two orders of magnitude faster than the other coordinate descent based methods,
i.e., \compCL\ and \zeroSum.

{\setlength{\tabcolsep}{0.13em}
\begin{table}[t]
\scriptsize
\caption{Results on synthetic datasets for log-contrast models with six non-zero regression coefficients.
The final objective value is indicated by $f^*$, while the CPU time in seconds is indicated by \textit{time}.}
\centering
{\begin{tabular}{ c c c c c c c c c c c }
\\
\multicolumn{11}{c}{$m = 2000$, $n = 2000$} \\
\midrule
& \multicolumn{2}{c}{$\lambda_1$} & \multicolumn{2}{c}{$\lambda_2$} & \multicolumn{2}{c}{$\lambda_3$} &
  \multicolumn{2}{c}{$\lambda_4$} & \multicolumn{2}{c}{$\lambda_5$} \\
  \cmidrule(lr){2-3} \cmidrule(lr){4-5} \cmidrule(lr){6-7} \cmidrule(lr){8-9} \cmidrule(lr){10-11}
& $f^*$ & time & $f^*$ & time & $f^*$ & time & $f^*$ & time & $f^*$ & time \\
\midrule
\ASZSL & 4.70e+04 & 0.02 & 1.61e+04 & 0.03 & 4.71e+03 & 0.04 & 1.59e+03 & 0.20 & 5.21e+02 & 0.38 \\
\compCL & 4.70e+04 & 2.10 & 1.61e+04 & 1.44 & 4.71e+03 & 1.44 & 1.59e+03 & 5.02 & 5.37e+02 & 5.89 \\
\QP & 4.70e+04 & 7.38 & 1.61e+04 & 7.06 & 4.71e+03 & 6.40 & 1.59e+03 & 7.47 & 5.21e+02 & 7.14 \\
\zeroSum & 4.71e+04 & 2.53 & 4.71e+04 & 2.54 & 2.79e+04 & 2.55 & 4.61e+03 & 2.59 & 2.01e+03 & 3.31 \\
\midrule \\
\multicolumn{11}{c}{$m = 2000$, $n = 4000$} \\
\midrule
& \multicolumn{2}{c}{$\lambda_1$} & \multicolumn{2}{c}{$\lambda_2$} & \multicolumn{2}{c}{$\lambda_3$} &
  \multicolumn{2}{c}{$\lambda_4$} & \multicolumn{2}{c}{$\lambda_5$} \\
  \cmidrule(lr){2-3} \cmidrule(lr){4-5} \cmidrule(lr){6-7} \cmidrule(lr){8-9} \cmidrule(lr){10-11}
& $f^*$ & time & $f^*$ & time & $f^*$ & time & $f^*$ & time & $f^*$ & time \\
\midrule
\ASZSL & 5.41e+04 & 0.04 & 1.84e+04 & 0.09 & 5.19e+03 & 0.09 & 1.79e+03 & 0.36 & 5.66e+02 & 1.22 \\
\compCL & 5.41e+04 & 4.30 & 1.84e+04 & 2.77 & 5.19e+03 & 2.82 & 1.79e+03 & 9.98 & 5.85e+02 & 11.49 \\
\QP & 5.41e+04 & 104.77 & 1.84e+04 & 96.41 & 5.19e+03 & 80.44 & 1.79e+03 & 105.43 & 5.66e+02 & 97.74 \\
\zeroSum & 5.42e+04 & 9.97 & 5.42e+04 & 9.93 & 3.85e+04 & 10.03 & 5.51e+03 & 10.02 & 2.16e+03 & 11.47 \\
\midrule \\
\multicolumn{11}{c}{$m = 2000$, $n = 10000$} \\
\midrule
& \multicolumn{2}{c}{$\lambda_1$} & \multicolumn{2}{c}{$\lambda_2$} & \multicolumn{2}{c}{$\lambda_3$} &
  \multicolumn{2}{c}{$\lambda_4$} & \multicolumn{2}{c}{$\lambda_5$} \\
  \cmidrule(lr){2-3} \cmidrule(lr){4-5} \cmidrule(lr){6-7} \cmidrule(lr){8-9} \cmidrule(lr){10-11}
& $f^*$ & time & $f^*$ & time & $f^*$ & time & $f^*$ & time & $f^*$ & time \\
\midrule
\ASZSL & 6.50e+04 & 0.10 & 2.18e+04 & 0.19 & 5.81e+03 & 0.24 & 2.04e+03 & 0.79 & 6.30e+02 & 1.64 \\
\compCL & 6.50e+04 & 11.30 & 2.18e+04 & 6.86 & 5.81e+03 & 7.08 & 2.04e+03 & 26.21 & 6.54e+02 & 29.84 \\
\QP & 6.50e+04 & 3239.67 & 2.18e+04 & 2537.57 & 5.81e+03 & 2364.73 & 2.04e+03 & 2738.23 & 6.30e+02 & 2745.07 \\
\zeroSum & 6.51e+04 & 61.90 & 6.51e+04 & 61.60 & 6.45e+04 & 61.92 & 7.22e+03 & 61.50 & 2.28e+03 & 63.73 \\
\bottomrule
\end{tabular}}
\label{tab:res_synt1}
\end{table}}

Next, we used the same datasets described above, but with the vector of regression coefficients $x$ containing $5\%$ of randomly chosen non-zero entries,
which were generated from a uniform distribution in $(-1,1)$.

The results are shown in Table~\ref{tab:res_synt2}.
Also in this case, we see that \ASZSL\ achieves the lowest objective function value and is the fastest method,
except for the largest problems with $\lambda_5$, where \compCL\ took less time, but it returned a higher objective function value.

{\setlength{\tabcolsep}{0.13em}
\begin{table}[t]
\scriptsize
\caption{Results on synthetic datasets for log-contrast models with $5\%$ of non-zero regression coefficients.
The final objective value is indicated by $f^*$, while the CPU time in seconds is indicated by \textit{time}.}
\centering
{\begin{tabular}{ c c c c c c c c c c c }
\\
\multicolumn{11}{c}{$m = 2000$, $n = 2000$} \\
\midrule
& \multicolumn{2}{c}{$\lambda_1$} & \multicolumn{2}{c}{$\lambda_2$} & \multicolumn{2}{c}{$\lambda_3$} &
  \multicolumn{2}{c}{$\lambda_4$} & \multicolumn{2}{c}{$\lambda_5$} \\
  \cmidrule(lr){2-3} \cmidrule(lr){4-5} \cmidrule(lr){6-7} \cmidrule(lr){8-9} \cmidrule(lr){10-11}
& $f^*$ & time & $f^*$ & time & $f^*$ & time & $f^*$ & time & $f^*$ & time \\
\midrule
\ASZSL & 3.46e+04 & 0.03 & 1.85e+04 & 0.12 & 6.03e+03 & 0.27 & 1.43e+03 & 0.55 & 2.71e+02 & 1.48 \\
\compCL & 3.46e+04 & 4.18 & 1.94e+04 & 10.21 & 8.39e+03 & 10.64 & 3.40e+03 & 9.83 & 1.52e+03 & 9.16 \\
\QP & 3.46e+04 & 6.85 & 1.85e+04 & 6.74 & 6.03e+03 & 9.69 & 1.43e+03 & 6.50 & 2.71e+02 & 6.25 \\
\zeroSum & 3.46e+04 & 2.43 & 2.02e+04 & 3.08 & 7.71e+03 & 3.95 & 1.49e+03 & 5.18 & 2.73e+02 & 40.79 \\
\midrule \\
\multicolumn{11}{c}{$m = 2000$, $n = 4000$} \\
\midrule
& \multicolumn{2}{c}{$\lambda_1$} & \multicolumn{2}{c}{$\lambda_2$} & \multicolumn{2}{c}{$\lambda_3$} &
  \multicolumn{2}{c}{$\lambda_4$} & \multicolumn{2}{c}{$\lambda_5$} \\
  \cmidrule(lr){2-3} \cmidrule(lr){4-5} \cmidrule(lr){6-7} \cmidrule(lr){8-9} \cmidrule(lr){10-11}
& $f^*$ & time & $f^*$ & time & $f^*$ & time & $f^*$ & time & $f^*$ & time \\
\midrule
\ASZSL & 8.52e+04 & 0.05 & 5.36e+04 & 0.31 & 3.22e+04 & 0.69 & 1.14e+04 & 2.20 & 2.33e+03 & 5.84 \\
\compCL & 8.52e+04 & 17.26 & 5.51e+04 & 19.75 & 3.50e+04 & 20.39 & 1.48e+04 & 18.10 & 4.89e+03 & 17.81 \\
\QP & 8.52e+04 & 96.86 & 5.36e+04 & 103.67 & 3.22e+04 & 95.91 & 1.14e+04 & 99.98 & 2.33e+03 & 96.76 \\
\zeroSum & 8.52e+04 & 9.78 & 6.75e+04 & 12.09 & 4.45e+04 & 16.19 & 1.19e+04 & 22.96 & 2.35e+03 & 149.59 \\
\midrule \\
\multicolumn{11}{c}{$m = 2000$, $n = 10000$} \\
\midrule
& \multicolumn{2}{c}{$\lambda_1$} & \multicolumn{2}{c}{$\lambda_2$} & \multicolumn{2}{c}{$\lambda_3$} &
  \multicolumn{2}{c}{$\lambda_4$} & \multicolumn{2}{c}{$\lambda_5$} \\
  \cmidrule(lr){2-3} \cmidrule(lr){4-5} \cmidrule(lr){6-7} \cmidrule(lr){8-9} \cmidrule(lr){10-11}
& $f^*$ & time & $f^*$ & time & $f^*$ & time & $f^*$ & time & $f^*$ & time \\
\midrule
\ASZSL & 1.73e+05 & 0.15 & 1.18e+05 & 0.91 & 4.85e+04 & 3.52 & 1.40e+04 & 18.53 & 2.86e+03 & 95.71 \\
\compCL & 1.73e+05 & 34.30 & 1.21e+05 & 46.58 & 5.41e+04 & 53.06 & 2.01e+04 & 40.07 & 5.68e+03 & 30.30 \\
\QP & 1.73e+05 & 2622.73 & 1.18e+05 & 2890.54 & 4.85e+04 & 2963.40 & 1.40e+04 & 3216.94 & 2.86e+03 & 3586.99 \\
\zeroSum & 1.73e+05 & 61.94 & 1.23e+05 & 92.17 & 5.47e+04 & 206.69 & 1.52e+04 & 700.20 & 3.24e+03 & 1930.67 \\
\bottomrule
\end{tabular}}
\label{tab:res_synt2}
\end{table}}

\subsection{Real datasets}\label{subsec:real_res}
Now we show the results obtained on real microbiome data, which are generally regarded as compositional in the literature
(see, e.g.,~\cite{combettes:2021,gloor:2017,mishra:2022,shi:2016}).
We used three datasets considered in~\cite{quinn:2020}, 
downloaded from~\url{https://github.com/nphdang/DeepCoDA} and containing data from~\cite{vangay:2019}
suitably adjusted to deal with log-contrast model by proper replacement of the zero features (which are not allowed in such models).
The considered datasets are for binary classification (then, responses are in $\{0,1\}$) and are described in Table~\ref{tab:real_data}.

\begin{table}
\scriptsize
\caption{Microbiome datasets from~\cite{quinn:2020,vangay:2019}.}
\centering
{\begin{tabular}{ c c c c c }
\toprule
Dataset & $m$ & $n$ & Class 1 & Class 2 \\
\midrule
1 & 2070 & 3090 & Gastro & Oral \\
2 & 404 & 3090 & Stool & Tongue \\
3 & 408 & 3090 & Subgingival & Supragingival \\
\bottomrule
\end{tabular}}
\label{tab:real_data}
\end{table}

For each dataset, first we applied a log transformation and then we chose $\lambda$ by a 5-fold cross validation.
%
The final results are shown in Table~\ref{tab:res_real}.
We see that \ASZSL\ took less than $1$ second on all the problems, still being the fastest method
and achieving the lowest objective function value.

\begin{table}[h]
\scriptsize
\caption{Results on microbiome datasets.
The final objective value is indicated by $f^*$, while the CPU time in seconds is indicated by \textit{time}.}
\centering
{\begin{tabular}{ c c c c c c c }
\toprule
& \multicolumn{2}{c}{Dataset 1} & \multicolumn{2}{c}{Dataset 2} & \multicolumn{2}{c}{Dataset 3} \\
  \cmidrule(lr){2-3} \cmidrule(lr){4-5} \cmidrule(lr){6-7}
& $f^*$ & time & $f^*$ & time & $f^*$ & time \\
\midrule
\ASZSL & 3.38e+01 & 0.48 & 2.84e+01 & 0.20 & 7.18e+01 & 0.08 \\
\compCL & 3.38e+01 & 13.86 & 2.95e+01 & 2.61 & 7.18e+01 & 4.39 \\
\QP & 3.38e+01 & 95.63 & 2.84e+01 & 57.50 & 7.18e+01 & 81.98 \\
\zeroSum & 2.56e+02 & 7.27 & 7.35e+01 & 1.77 & 1.05e+02 & 0.97 \\
\bottomrule
\end{tabular}}
\label{tab:res_real}
\end{table}

{\setlength{\tabcolsep}{0.2em}
\begin{table}[h]
\scriptsize
\caption{Comparison of CPU time required by \ASZSL\ with and without warm start over $10$ values of $\lambda$.
All results are in seconds.}
\centering
{\begin{tabular}{ c c c c c c c c c c c }
\toprule
& $\lambda_1$ & $\lambda_2$ & $\lambda_3$ & $\lambda_4$ & $\lambda_5$ & $\lambda_6$ & $\lambda_7$ & $\lambda_8$ & $\lambda_9$ & $\lambda_{10}$ \\
\midrule
time with warm start & 0.14 & 0.46 & 0.99 & 1.77 & 2.95 & 4.87 & 7.83 & 12.34 & 15.60 & 19.45 \\
time without warm start & 0.14 & 0.44 & 0.80 & 1.29 & 2.79 & 5.24 & 10.94 & 24.69 & 52.16 & 104.07 \\
\midrule
cumulative time with warm start & 0.14 & 0.60 & 1.59 & 3.36 & 6.31 & 11.18 & 19.01 & 31.35 & 46.95 & 66.40 \\
cumulative time without warm start & 0.14 & 0.58 & 1.38 & 2.67 & 5.46 & 10.69 & 21.63 & 46.32 & 98.48 & 202.55 \\
\bottomrule
\end{tabular}}
\label{tab:res_grid}
\end{table}}

\subsection{Optimizing over a grid of regularization parameters}\label{subsec:grid_res}
In the previous experiments, we have shown the performances of~\ASZSL\ on several instances of problems~\eqref{prob}
for a specific value of the regularization parameter $\lambda$.
Now, we want to analyze a simple \textit{warm start} strategy for~\ASZSL\ to solve a sequence of problems
with decreasing regularization parameters.
This can be useful in practice when a suitable value of $\lambda$ is not known in advance and a parameter selection procedure must be carried out.

We generated $10$ synthetic datasets as explained in Subsection~\ref{subsec:synt_res},
with $m = 2000$, $n = 10000$ and $5\%$ of non-zero entries in the vector of regression coefficients.
For each dataset, we considered $10$ different parameters $\lambda_1,\ldots,\lambda_{10}$
logarithmically equally spaced between $10^{\lambda_1}$ and $10^{\lambda_{10}}$,
where $\lambda_1 = 0.95 \lambda_{\text{max}}$, $\lambda_{10} = 10^{-3} \lambda_{\text{max}}$
and $\lambda_{\text{max}}$ was computed as explained in Subsection~\ref{subsec:synt_res}.

For $\lambda = \lambda_i$, $i = 2,\ldots,10$,
the warm start strategy simply consists in setting the starting point of \ASZSL\ as the optimal solution computed with $\lambda = \lambda_{i-1}$.

In Table~\ref{tab:res_grid}, we report the average results obtained with and without warm start.
From the third and the fourth row of the table, we observe that the warm start strategy
allowed us to complete the whole optimization process in $66$ seconds,
while it took more than $200$ seconds without warm start.
From the second and the third row of the table, we also note that running \ASZSL\ with one small $\lambda$ took more than
running \ASZSL\ several times using decreasing regularization parameters with warm start.
For example, \ASZSL\ took $104$ seconds for $\lambda = \lambda_{10}$ without warm start,
but it took a total of $66$ seconds when it was run ten times with $\lambda = \lambda_1,\ldots,\lambda_{10}$
using the warm start strategy.
This suggests that the warm start strategy might be used to further speed up the algorithm
when problem~\eqref{prob} must be solved with small values of~$\lambda$.

\section{Conclusions}\label{sec:concl}
In this paper we proposed \ASZSL, a 2-coordinate descent method with active-set estimate
to solve lasso problems with zero-sum constraint.
At every iteration, \ASZSL\ chooses between two strategies:
\MVP\ uses the whole gradient of the smooth term of the objective function and is more accurate in the active-set estimate,
while \AC\ only needs partial derivatives and is computationally more efficient.
A suitable test is used to choose between the two strategies, considering the progress in the objective function.
A theoretical analysis was carried out, showing global convergence of \ASZSL\ to optimal solutions.
We performed numerical experiments on synthetic and real datasets, showing the effectiveness of the proposed method compared
to other algorithms from the literature.
In particular, the good numerical results are due both to the identification of the zero variables carried out by the proposed active-set estimate
and to the possibility of choosing, at each iteration, between \MVP\ and \AC, in order to balance accuracy and computational efficiency.

We finally outlined a warm start strategy for \ASZSL\ to solve a sequence of problems with decreasing regularization parameters,
which may be useful when a parameter selection procedure must be carried out and many problems with different regularization parameters must be solved.

Possible directions for future research might include the extension of these results to a more general class of problems,
for example considering other loss functions, other regularization terms and other types of constraints.

\appendix
\gdef\thesection{\Alph{section}} 
\makeatletter
\renewcommand\@seccntformat[1]{Appendix \csname the#1\endcsname.\hspace{0.5em}}
\makeatother
\section{The subproblem}\label{app:subproblem}
According to~\eqref{subprob_mvp} and~\eqref{subprob_ac2cd},
every variable update in \ASZSL\ requires the resolution of a subproblem, both when using \MVP\ and when using \AC.
As highlighted in Remark~\ref{rem:subprob}, each subproblem consists in the minimization of $f$ along a direction of the form $\pm(e_i - e_j)$.
In the next proposition, we give an equivalent expression of $f$ along $\pm(e_i - e_j)$ as an univariate function, which will be useful to compute the minimizer.

\begin{proposition}\label{prop:fij}
Let $\bar x$ be any feasible point of problem~\eqref{prob}. For any $i \ne j$, we have
\[
f(\bar x + \xi (e_i-e_j)) = \fijx ij{\bar x}(\bar x_i + \xi), \quad \forall \xi \in \R,
\]
where $\fijx ij{\bar x} \colon \R \to \R$ is the function defined as follows:
\[
\fijx ij{\bar x}(u) = \frac 12 \alpha u^2 - \beta u + \lambda(|u|+|u - \bar x_i - \bar x_j|) + c,
\]
with
\begin{align*}
\alpha & = \norm{A^i - A^j}^2, \\
\beta & = \alpha \bar x_i - \nabla_i \f0(\bar x) + \nabla_j \f0(\bar x), \\
c & = \frac 12 \norm{y - A \bar x + (A^i-A^j) \bar x_i}^2 + \lambda \sum_{t \ne i,j} |\bar x_t|.
\end{align*}
\end{proposition}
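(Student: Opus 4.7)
The proof is essentially a direct algebraic substitution, and the plan is to carry it out in four small steps, changing variables from $\xi$ to $u = \bar x_i + \xi$ at the very start so that the final expression matches $f^{i,j}_{\bar x}(u)$ exactly.

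First, I would handle the $\ell_1$ part. Since $x = \bar x + \xi(e_i - e_j)$ only changes the $i$-th and $j$-th coordinates, and $\bar x_j - \xi = \bar x_j - (u - \bar x_i) = (\bar x_i + \bar x_j) - u$, I get
\[
\lambda \|x\|_1 = \lambda |u| + \lambda |u - \bar x_i - \bar x_j| + \lambda \sum_{t \ne i,j} |\bar x_t|,
\]
which already accounts for the two absolute-value terms in $f^{i,j}_{\bar x}$ and part of the constant $c$.

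Next, I would expand the quadratic term using $A(\bar x + \xi(e_i - e_j)) - y = (A\bar x - y) + \xi(A^i - A^j)$:
\[
\tfrac12 \|Ax - y\|^2 = \tfrac12 \|A\bar x - y\|^2 + \xi (A\bar x - y)^T (A^i - A^j) + \tfrac12 \xi^2 \|A^i - A^j\|^2.
\]
Using the identities $\nabla_i \f0(\bar x) = (A^i)^T(A\bar x - y)$ and $\nabla_j \f0(\bar x) = (A^j)^T(A\bar x - y)$, the coefficient of $\xi$ becomes $\nabla_i \f0(\bar x) - \nabla_j \f0(\bar x)$, and the coefficient of $\xi^2/2$ is $\alpha = \|A^i - A^j\|^2$, as required.

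Then I would substitute $\xi = u - \bar x_i$ into this quadratic, collect the powers of $u$, and verify that the coefficient of $u$ is exactly $-\beta = -\alpha \bar x_i + \nabla_i \f0(\bar x) - \nabla_j \f0(\bar x)$, while the coefficient of $u^2/2$ stays $\alpha$. The remaining $u$-independent terms must coincide with the claimed $c$. To confirm this, I would expand the squared-norm form in the statement, namely
\[
\tfrac12 \|y - A\bar x + (A^i - A^j)\bar x_i\|^2 = \tfrac12 \|A\bar x - y\|^2 - \bar x_i\bigl(\nabla_i \f0(\bar x) - \nabla_j \f0(\bar x)\bigr) + \tfrac12 \alpha \bar x_i^2,
\]
where I again used $(A^i - A^j)^T(A\bar x - y) = \nabla_i \f0(\bar x) - \nabla_j \f0(\bar x)$, and check that this matches the $u$-independent leftover from the translation $\xi = u - \bar x_i$. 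Adding $\lambda \sum_{t \ne i,j} |\bar x_t|$ recovers the full $c$.

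There is no real obstacle here; the only thing to be careful about is bookkeeping during the translation $\xi \mapsto u - \bar x_i$, making sure the cross term $-\alpha \bar x_i u$ combines correctly with the linear gradient contribution to produce $-\beta u$ rather than some sign-flipped or off-by-$\bar x_j$ variant. Feasibility of $\bar x$ (i.e., $e^T \bar x = 0$) is not actually needed in the computation; it enters only through the fact that the update stays feasible, not through any identity used in the algebra.
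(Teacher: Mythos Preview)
Your proposal is correct and follows essentially the same direct-expansion approach as the paper. One minor difference worth noting: the paper works component-wise, introduces an auxiliary vector $\rho$, and invokes the feasibility condition $e^T\bar x=0$ twice to simplify sums, whereas you work directly with the residual $A\bar x - y$ and correctly observe that feasibility is not actually needed for the identity to hold.
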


\begin{proof}
First, let us write the function $f$ as follows:
\[
f(x) = \frac 12 \sum_{h=1}^m \biggl(A_{hi} x_i + A_{hj} x_j + \sum_{t \ne i,j} A_{ht} x_t - y_h\biggr)^2 + \lambda\biggl(|x_i|+|x_j|+\sum_{t \ne i,j} |x_t|\biggr).
\]
Now, choose any $\xi \in \R$ and let $u = \bar x + \xi (e_i-e_j)$. To prove the desired result, we have to show that
\begin{equation}\label{fw_proof}
f(u) = \fijx ij{\bar x}(u_i) = \frac 12 \alpha u_i^2 - \beta^T u_i + \lambda(|u_i|+|u_i - \bar x_i - \bar x_j|) + c.
\end{equation}
Clearly, $u_t = \bar x_t$ for all $t \ne i,j$.
Since $e^T \bar x = 0$ from the feasibility of $\bar x$, it follows that $e^T u = 0$ and
$u_j = -\sum_{t \ne i,j} u_t - u_i = -\sum_{t \ne i,j} \bar x_t - u_i = \bar x_i + \bar x_j - u_i$.
Then,
\[
\begin{split}
f(u) = & \frac 12 \sum_{h=1}^m \biggl((A_{hi} - A_{hj}) u_i + \sum_{t \ne i,j} (A_{ht}- A_{hj}) \bar x_t - y_h\biggr)^2 + \\
       & + \lambda\biggl(|u_i|+|u_i - \bar x_i - \bar x_j| + \sum_{t \ne i,j} |\bar x_t|\biggr).
\end{split}
\]
Now, let us define the vector $\rho \in \R^m$ as
\[
\rho_h = y_h - \sum_{t \ne i,j} (A_{ht}- A_{hj}) \bar x_t, \quad h = 1,\ldots,m,
\]
so that
\[
\begin{split}
f(u) & = \frac 12 \norm{(A^i - A^j) u_i - \rho}^2 + \lambda\biggl(|u_i|+|u_i - \bar x_i - \bar x_j| + \sum_{t \ne i,j} |\bar x_t|\biggr) \\
     & = \frac 12 \alpha^2 u_i^2 - \rho^T (A^i-A^j) u_i + \frac 12\|\rho\|^2
         + \lambda\biggl(|u_i|+|u_i - \bar x_i - \bar x_j| + \sum_{t \ne i,j} |\bar x_t|\biggr).
\end{split}
\]
For all $h = 1,\ldots,m$, we can write
\[
\begin{split}
\rho_h & = y_h - \sum_{t \ne i,j}A_{ht} \bar x_t + A_{hj} \sum_{t \ne i,j} \bar x_t
         = y_h - (A \bar x)_h + A_{hi} \bar x_i + A_{hj} \bar x_j + A_{hj} \sum_{t \ne i,j} \bar x_t, \\
       & = y_h - (A \bar x)_h + (A_{hi} - A_{hj}) \bar x_i,
\end{split}
\]
where the last inequality follows from the fact that $\bar x_j = -\sum_{t \ne i,j} \bar x_t - \bar x_i$, since $e^T \bar x = 0$. Therefore,
\begin{equation}\label{betabar}
\rho = y - A \bar x + (A^i-A^j) \bar x_i
\end{equation}
and we obtain
\[
f(u) = \frac 12 \alpha u_i^2 - \rho^T (A^i-A^j) u_i + \lambda\biggl(|u_i|+|u_i - \bar x_i - \bar x_j|\biggr) + c.
\]
Finally, since $(A \bar x - y)^T A^i = \nabla_i \f0(\bar x)$ and $(A \bar x - y)^T A^j = \nabla_j \f0(\bar x)$, from~\eqref{betabar} it follows that
$\rho^T (A^i-A^j) = \|A^i-A^j\|^2 \bar x_i - \nabla_i \f0(\bar x) + \nabla_j \f0(\bar x) = \alpha \bar x_i - \nabla_i \f0(\bar x) + \nabla_j \f0(\bar x) = \beta$,
thus proving~\eqref{fw_proof}.
\end{proof}

\subsection{Ensuring strict convexity}\label{app:strict_conv}
In the convergence analysis of \ASZSL, we require $f$ to be strictly convex along $\pm(e_i - e_j)$
for every index pair $(i,j)$ selected by \MVP\ or \AC\ (see Subsection~\ref{subsec:conv}).
Using Proposition~\ref{prop:fij}, it is now easy to show that this requirement is satisfied if and only if $A^i \ne A^j$.

\begin{corollary}\label{corol:fij}
Let $\bar x$ be a feasible point for problem~\eqref{prob}.
For any $i \ne j$, we have that $f$ is strictly convex over the line $\{\bar x + \xi (e_i-e_j)), \, \xi \in \R\}$ if and only if $A^i \ne A^j$.
\end{corollary}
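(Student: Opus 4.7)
The plan is to reduce the claim to a statement about the univariate function $\fijx{i}{j}{\bar x}$ furnished by Proposition~\ref{prop:fij}. Since the map $\xi \mapsto \bar x_i + \xi$ is an affine bijection $\R \to \R$, strict convexity of $\xi \mapsto f(\bar x + \xi(e_i - e_j))$ on $\R$ is equivalent to strict convexity of $u \mapsto \fijx{i}{j}{\bar x}(u)$ on $\R$. So the whole corollary boils down to deciding when the function
\[
\fijx{i}{j}{\bar x}(u) = \tfrac12 \alpha u^2 - \beta u + \lambda\bigl(|u| + |u - \bar x_i - \bar x_j|\bigr) + c,
\]
with $\alpha = \norm{A^i - A^j}^2$, is strictly convex.

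For the forward implication, I would assume $A^i \ne A^j$, so $\alpha > 0$. Then $\tfrac12 \alpha u^2$ is strictly convex, the linear part $-\beta u + c$ is convex, and $\lambda(|u| + |u - \bar x_i - \bar x_j|)$ is convex (since $\lambda \ge 0$ and absolute values are convex). The sum of a strictly convex function and convex functions is strictly convex, which gives the conclusion.

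For the reverse implication (which I expect to be the marginally more delicate direction), I would argue by contraposition: suppose $A^i = A^j$, so $\alpha = 0$. Then $\fijx{i}{j}{\bar x}$ reduces to an affine function plus $\lambda$ times a sum of absolute values, which is piecewise affine. A piecewise-affine convex function is linear on each piece of its domain, hence fails to be strictly convex on any such piece of positive length (e.g., take two distinct points $u_1, u_2$ in the interior of a piece and observe that $\fijx{i}{j}{\bar x}$ is affine on $[u_1,u_2]$, so Jensen's inequality holds with equality at the midpoint). This yields the contradiction with strict convexity and completes the proof. The only calculation needed is the explicit formula for $\alpha$ already provided by Proposition~\ref{prop:fij}, so no substantial obstacle remains.
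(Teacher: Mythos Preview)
Your proposal is correct and follows essentially the same approach as the paper: reduce to the univariate function $\fijx{i}{j}{\bar x}$ via Proposition~\ref{prop:fij} and the affine reparametrization, then use the formula $\alpha = \norm{A^i - A^j}^2$ to see that strict convexity holds iff $\alpha>0$ iff $A^i \ne A^j$. You spell out both implications in a bit more detail than the paper (which simply asserts the equivalence ``from the expression of $\alpha$''), but the argument is the same.
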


\begin{proof}
The result follows from Proposition~\ref{prop:fij}, observing that $\fijx ij{\bar x}$ is strictly convex if and only if $A^i \ne A^j$
(from the expression of $\alpha$) and that any pair of distinct points $x',x''$ over the line $\{\bar x + \xi (e_i-e_j)), \, \xi \in \R\}$
can be expressed as $x' = \bar x + \xi' (e_i-e_j)$ and $x'' = \bar x + \xi'' (e_i-e_j)$, for some $\xi' \ne \xi''$.
%
%
\end{proof}

When $A^i = A^j$, in the next proposition we show the variable $x_i$ can be safely removed from the problem.

\begin{proposition}\label{prop:x_i_zero}
Assume that $A^i = A^j$ for some $i \ne j$. If $x^*$ is an optimal solution of $\min \{f(x) \colon e^T x = 0, x_i = 0\}$,
then $x^*$ is an optimal solution of problem~\eqref{prob} as well.
\end{proposition}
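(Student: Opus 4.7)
The plan is to exploit the fact that when $A^i = A^j$, the smooth term $\f0(x) = \tfrac12\|Ax-y\|^2$ is insensitive to the split of mass between coordinates $i$ and $j$, depending only on the sum $x_i + x_j$. Indeed, for every $h$ we have $A_{hi} x_i + A_{hj} x_j = A_{hi}(x_i + x_j)$, so $Ax$ depends on $(x_i, x_j)$ only through $x_i + x_j$. The only part of $f$ that distinguishes $x_i$ from $x_j$ is therefore the $\ell_1$ term, and the triangle inequality $|x_i + x_j| \le |x_i| + |x_j|$ says that collapsing all the mass into a single one of the two coordinates never hurts the objective.

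Concretely, I would take an arbitrary feasible point $\tilde x$ of problem~\eqref{prob} and construct $\hat x \in \R^n$ by setting
\[
\hat x_i = 0, \qquad \hat x_j = \tilde x_i + \tilde x_j, \qquad \hat x_t = \tilde x_t \ \ \text{for } t \ne i,j.
\]
First I would check that $\hat x$ is feasible for the restricted problem $\min\{f(x) \colon e^T x = 0,\ x_i = 0\}$: by construction $\hat x_i = 0$, and $e^T \hat x = (\tilde x_i + \tilde x_j) + \sum_{t \ne i,j} \tilde x_t = e^T \tilde x = 0$.

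Next I would verify $A\hat x = A\tilde x$. Since $A^i = A^j$,
\[
A\hat x = A^j(\tilde x_i + \tilde x_j) + \sum_{t \ne i,j} A^t \tilde x_t
       = A^i \tilde x_i + A^j \tilde x_j + \sum_{t \ne i,j} A^t \tilde x_t = A \tilde x,
\]
so $\f0(\hat x) = \f0(\tilde x)$. On the regularization side, the triangle inequality gives
\[
\norm{\hat x}_1 = |\tilde x_i + \tilde x_j| + \sum_{t \ne i,j} |\tilde x_t|
              \le |\tilde x_i| + |\tilde x_j| + \sum_{t \ne i,j} |\tilde x_t| = \norm{\tilde x}_1,
\]
and hence $f(\hat x) \le f(\tilde x)$. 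Since $x^*$ is optimal for the restricted problem and $\hat x$ is feasible for it, $f(x^*) \le f(\hat x) \le f(\tilde x)$. As $\tilde x$ was an arbitrary feasible point of~\eqref{prob}, this shows $x^*$ is optimal for~\eqref{prob}.

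There is no real obstacle here: the statement is essentially a reduction by symmetry, and the only thing to be careful about is the feasibility check for $\hat x$ (which uses that redistributing mass between $i$ and $j$ preserves $e^T x$) and ensuring we invoke $A^i = A^j$ to keep $\f0$ unchanged, so that the bound on $\|\cdot\|_1$ via the triangle inequality is the only mechanism needed.
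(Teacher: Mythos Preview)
Your proof is correct and follows essentially the same approach as the paper: both construct the same point by moving all mass from coordinate $i$ to coordinate $j$, then use $A^i = A^j$ to keep $\f0$ unchanged and the triangle inequality to control the $\ell_1$ term. The only cosmetic difference is that the paper phrases the argument by contradiction, whereas you give the direct version.
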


\begin{proof}
By contradiction, assume that $x^*$ is not an optimal solution of problem~\eqref{prob}. Then, there exists a feasible point $x'$ for problem~\eqref{prob}
such that $f(x') < f(x^*)$.
Now, let us define $x''$ as follows:
\[
x''_h =
\begin{cases}
x'_h, & \quad \text{if } h \in \{1,\ldots,n\} \setminus \{i,j\}, \\
0, & \quad \text{if } h = i, \\
x'_i + x'_j, & \quad \text{if } h = j.
\end{cases}
\]
Clearly, $e^T x'' = e^T x' = 0$. Since $A^i = A^j$, we have $A x'' = A x'$ and, using the triangular inequality, $\norm{x''}_1 \le \norm{x'}_1$.
It follows that $f(x'') \le f (x') < f(x^*)$, contradicting the fact that $x^*$ is an optimal solution of $\min \{f(x) \colon e^T x = 0, x_i = 0\}$.
\end{proof}

In conclusion,
Proposition~\ref{prop:x_i_zero} and Corollary~\ref{corol:fij} suggest a simple procedure to ensure that, after a finite number of iterations,
$f$ is strictly convex along $\pm(e_i - e_j)$ for every index pair $(i,j)$ selected by \MVP\ or \AC.
Namely, if we find two identical columns $A^i$ and $A^j$, we can simply fix $x_i = 0$ and remove this variable from problem~\eqref{prob} (together with the column $A^i$).
We note that checking if two columns $A^i$ and $A^j$ are identical does not require additional computational burden because, as explained below,
$\norm{A^i-A^j}$ must be computed anyway
(in order to calculate the coefficients $\alpha$ and $\beta$ appearing in Proposition~\ref{prop:fij}).

\subsection{Computing the optimal solution}\label{app:solutions}
Given a feasible point $\bar x$ of problem~\eqref{prob} and an index pair $(i,j)$, with $i \ne j$,
now we show how to compute
\[
\begin{split}
& \hat x = \bar x + \xi^*(e_i-e_j), \\
& \xi^* \in \argmin_{\xi \in \R} \{f(\bar x + \xi(e_i-e_j))\}.
\end{split}
\]
According to~\eqref{subprob_mvp} and~\eqref{subprob_ac2cd}, a computation of this form is needed for the variable update
both when using \MVP\ and when using \AC.
Note that $\hat x$ can be equivalently obtained as an optimal solution of the following problem:
\begin{equation}\label{subprob}
\begin{split}
& \min f(x) \\
& x \in \{\bar x + \xi (e_i-e_j), \, \xi \in \R\}.
\end{split}
\end{equation}
So, in view of Proposition~\ref{prop:fij}, we can calculate
\[
u^* \in \argmin \fijx ij{\bar x}(u)
\]
and set
\[
\hat x_h =
\begin{cases}
u^*, \quad & \text{if }h = i, \\
\bar x_h, \quad & \text{if } h \ne i,j, \\
 -\sum_{t \ne j} \hat x_t, \quad & \text{if } h = j.
\end{cases}
\]
To compute $u^*$, let us first recall that, from Proposition~\ref{prop:fij}, we have
\begin{equation}\label{fij}
\fijx ij{\bar x}(u) = \frac 12 \alpha u^2 - \beta u + \lambda(|u|+|u - \bar x_i - \bar x_j|) + c,
\end{equation}
where $\alpha = \norm{A^i - A^j}^2$, $\beta = \alpha \bar x_i - \nabla_i \f0(\bar x) + \nabla_j \f0(\bar x)$
and $c$ is a constant.
Since $\fijx ij{\bar x}$ is coercive, then it has a minimizer.
If $\alpha = 0$ (i.e., if $A^i = A^j$), we explained above that we can simply fix $x_i = 0$
and remove this variable from the problem.
So, here we only focus on $\alpha>0$.
In this case, observe that $\fijx ij{\bar x}$ is strictly convex, has a unique minimizer $u^*$ and we can write
\[
\fijx ij{\bar x}(u) =
\begin{cases}
\frac 12 \alpha u^2 - \beta u + 2 \lambda u - \lambda (\bar x_i + \bar x_j) + c, \quad & \text{if } u \ge 0 \text{ and } u \ge \bar x_i - \bar x_j, \\
\frac 12 \alpha u^2 - \beta u - 2 \lambda u + \lambda (\bar x_i + \bar x_j) + c, \quad & \text{if } u \le 0 \text{ and } u \le \bar x_i - \bar x_j, \\
\frac 12 \alpha u^2 - \beta u + \lambda (\bar x_i + \bar x_j) + c, \quad & \text{if } u \ge 0 \text{ and } u < \bar x_i - \bar x_j, \\
\frac 12 \alpha u^2 - \beta u - \lambda (\bar x_i + \bar x_j) + c, \quad & \text{if } u \le 0 \text{ and } u > \bar x_i - \bar x_j.
\end{cases}
\]
Hence, we can first seek a stationary point of $\fijx ij{\bar x}$ where the function is differentiable, i.e., in $\R \setminus \{0,\bar x_i + \bar x_j\}$.
If such a stationary point exists, then it will be the desired minimizer $u^*$.
Otherwise, $u^*$ will be a point of non-differentiability, that is, either $0$ or $\bar x_i + \bar x_j$.
This procedure is reported in Algorithm~\ref{alg:subprob}.

\begin{algorithm}
\footnotesize
\caption{\texttt{\hspace*{0.1truecm}to compute the minimizer $u^*$ of~\eqref{fij}}}
\label{alg:subprob}
\begin{algorithmic}
\par\vspace*{0.1cm}
\item[]\hspace*{-0.1truecm}$\,\,\,\,\,\,$\vspace{0.1truecm}\hspace*{0.1truecm} \textit{seek a stationary point}
\item[]\hspace*{-0.1truecm}$\,\,\,0$\vspace{0.1truecm}\hspace*{0.1truecm} Set $\bar u = \dfrac{\beta-2\lambda}{\alpha}$
\item[]\hspace*{-0.1truecm}$\,\,\,1$\vspace{0.1truecm}\hspace*{0.1truecm} \textbf{If} $\bar u > \max\{\bar x_i+\bar x_j,0\}$
\item[]\hspace*{-0.1truecm}$\,\,\,2$\vspace{0.1truecm}\hspace*{0.6truecm} Set $u^* = \bar u$ and EXIT (\textit{stationary point found})
\item[]\hspace*{-0.1truecm}$\,\,\,3$\vspace{0.1truecm}\hspace*{0.1truecm} \textbf{Else}
\item[]\hspace*{-0.1truecm}$\,\,\,4$\vspace{0.1truecm}\hspace*{0.6truecm} Set $\bar u = \dfrac{\beta+2\lambda}{\alpha}$
\item[]\hspace*{-0.1truecm}$\,\,\,5$\vspace{0.1truecm}\hspace*{0.6truecm} \textbf{If} $\bar u < \min\{\bar x_i+\bar x_j,0\}$
\item[]\hspace*{-0.1truecm}$\,\,\,6$\vspace{0.1truecm}\hspace*{1.1truecm} Set $u^* = \bar u$ and EXIT (\textit{stationary point found})
\item[]\hspace*{-0.1truecm}$\,\,\,7$\vspace{0.1truecm}\hspace*{0.6truecm} \textbf{Else}
\item[]\hspace*{-0.1truecm}$\,\,\,8$\vspace{0.1truecm}\hspace*{1.1truecm} Set $\bar u = \dfrac{\beta}{\alpha}$
\item[]\hspace*{-0.1truecm}$\,\,\,9$\vspace{0.1truecm}\hspace*{1.1truecm} \textbf{If} $\bar u (\bar u - \bar x_i - \bar x_j) < 0$
\item[]\hspace*{-0.1truecm}$10$\vspace{0.1truecm}\hspace*{1.6truecm} Set $u^* = \bar u$ and EXIT (\textit{stationary point found})
\item[]\hspace*{-0.1truecm}$11$\vspace{0.1truecm}\hspace*{1.1truecm} \textbf{End if}
\item[]\hspace*{-0.1truecm}$12$\vspace{0.1truecm}\hspace*{0.6truecm} \textbf{End if}
\item[]\hspace*{-0.1truecm}$13$\vspace{0.1truecm}\hspace*{0.1truecm} \textbf{End if}
\item[]
\item[]\hspace*{-0.1truecm}$\,\,\,\,\,\,$\vspace{0.1truecm}\hspace*{0.1truecm} \textit{stationary point not found, the minimizer is a point of non-differentiability}
\item[]\hspace*{-0.1truecm}$14$\vspace{0.1truecm}\hspace*{0.1truecm} \textbf{If} $\fijx ij{\bar x}(0) \le \fijx ij{\bar x}(\bar x_i + \bar x_j)$, then set $u^* = 0$
\item[]\hspace*{-0.1truecm}$15$\vspace{0.1truecm}\hspace*{0.1truecm} \textbf{Else}, set $u^* = \bar x_i + \bar x_j$
\end{algorithmic}
\end{algorithm}

Note that, in addition to $\nabla_i \f0(\bar x)$ and $\nabla_j \f0(\bar x)$,
in Algorithm~\ref{alg:subprob} we only have to compute $\norm{A^i - A^j}^2$ to get $\alpha$ and $\beta$, with a cost of $\mathcal O(m)$ operations.

\newpage
\bibliography{cristofari_2022}
\addcontentsline{toc}{section}{\refname}

\end{document}